\pgfplotsset{compat=1.9}
\def\OO{\mathcal{O}}
\def\alg#1{{\it\bfseries{#1}}}
\def\set#1#2{\big\{#1\colon#2\big\}}
\def\prod#1#2{{\langle #1,#2\rangle}}
\def\R{\mathbb{R}}
\def\0{\boldsymbol{0}}
\def\SS{\mathcal S}
\def\TT{\mathcal T}
\def\BB{\boldsymbol{B}}
\def\MM{\boldsymbol{M}}
\def\RR{\boldsymbol{R}}
\def\II{\boldsymbol{\mathcal{I}}}
\def\N{\mathbb{N}}
\def\R{\mathbb{R}}
\def\l0r{\!\left(0\right)}
\def\Cmh0{{\tilde{C}}_{{0}}}
\newcounter{statement}
\newenvironment{statement}[2][!]{%
% \begin{statement}
\vskip3mm
\hrule
\hrule
\hrule
\vskip1mm
\noindent%
%dpr%\begin{boxedminipage}{\textwidth}
%%\begin{mdframed}[everyline=true]
\refstepcounter{statement}%
\bf#2~\thestatement%
\ifthenelse{\equal{#1}{!}}{.\ }{~(#1).\ }%
\it%
}{%
% \end{statement}
%dpr%\end{boxedminipage}
%%\end{mdframed}
\vskip1mm
\hrule
\hrule
\hrule
\vskip2mm
%\vspace{1mm}
}
\newenvironment{theorem}[1][!]{\begin{statement}[#1]{Theorem}}{\end{statement}}
\newenvironment{lemma}[1][!]{\begin{statement}[#1]{Lemma}}{\end{statement}}
\newenvironment{definition}[1][!]{\begin{statement}[#1]{Definition}}{\end{statement}}
\newenvironment{proposition}[1][!]{\begin{statement}[#1]{Proposition}}{\end{statement}}
\newenvironment{remark}[1][!]{\begin{statement}[#1]{Remark}}{\end{statement}}
\newenvironment{algorithm}[1][!]{\begin{statement}[#1]{Algorithm}}{\end{statement}}
\def\new{{\rm new}}
\def\old{{\rm old}}
\def\mod{{\rm mod}}
\def\id{{\rm id}}
\title{D\"orfler marking with minimal cardinality\\ is a linear complexity problem}
\author{Carl-Martin Pfeiler and Dirk Praetorius}
\address{TU Wien, Institute for Analysis and Scientific Computing, Wiedner Hauptstr. 8-10/E101/4, 1040 Vienna, Austria}
\email{carl-martin.pfeiler@asc.tuwien.ac.at \quad \rm (corresponding author)}
\email{dirk.praetorius@asc.tuwien.ac.at}
\keywords{D\"orfler marking criterion, adaptive finite element method, optimal complexity.}
\subjclass[2010]{65N50, 65N30, 68Q25.}
\thanks{{\bf Acknowledgement.} The authors thankfully acknowledge support by the Austrian Science Fund (FWF) through the doctoral school \emph{Dissipation and dispersion in nonlinear PDEs} (grant W1245) and through the research project \emph{Optimal adaptivity for BEM and FEM-BEM coupling} (grant P27005).}
\def\@seccntformat#1{\hspace*{4mm}%
  \protect\textup{\protect\@secnumfont
    \ifnum\pdfstrcmp{subsection}{#1}=0 \bfseries\fi% subsection # in \bfseries
    \csname the#1\endcsname
    \protect\@secnumpunct
  }%
}
\newcommand*\patchAmsMathEnvironmentForLineno[1]{%
  \expandafter\let\csname old#1\expandafter\endcsname\csname #1\endcsname
  \expandafter\let\csname oldend#1\expandafter\endcsname\csname end#1\endcsname
  \renewenvironment{#1}%
     {\linenomath\csname old#1\endcsname}%
     {\csname oldend#1\endcsname\endlinenomath}}% 
\newcommand*\patchBothAmsMathEnvironmentsForLineno[1]{%
  \patchAmsMathEnvironmentForLineno{#1}%
  \patchAmsMathEnvironmentForLineno{#1*}}%
\def\PP{\mathfrak P}
\def\MM{\mathcal M}
\def\MMM{\mathfrak M}
\def\II{\mathcal I}
\def\JJ{\mathcal J}
\def\Nmin{N_{\rm min}}
\def\BB{\mathcal{B}}
\def\RR{\mathcal{R}}
\begin{document}

%%%%%%%%%%%%%%%%%%%%%%%%%%%%%%%%%%%%%%%%%%%%%%%%%%%%%%%%%%%%%%%%%%%%%%%%%%%%%%%%%%%
%%%%%%%%%%%%%%%%%%%%%%%%%%%%%%%%%%%%%%%%%%%%%%%%%%%%%%%%%%%%%%%%%%%%%%%%%%%%%%%%%%%
\begin{abstract}
Most adaptive finite element strategies employ the D\"orfler marking strategy to single out certain elements $\MM \subseteq \TT$ of a triangulation $\TT$ for refinement. In the literature, different algorithms have been proposed to construct $\MM$, where usually two goals compete: On the one hand, $\MM$ should contain a minimal number of elements. On the other hand, one aims for linear costs with respect to the cardinality of $\TT$. Unlike expected in the literature, we formulate and analyze an algorithm, which constructs a minimal set $\MM$ at linear costs. Throughout, pseudocodes are given.
\end{abstract}
%%%%%%%%%%%%%%%%%%%%%%%%%%%%%%%%%%%%%%%%%%%%%%%%%%%%%%%%%%%%%%%%%%%%%%%%%%%%%%%%%%%
%%%%%%%%%%%%%%%%%%%%%%%%%%%%%%%%%%%%%%%%%%%%%%%%%%%%%%%%%%%%%%%%%%%%%%%%%%%%%%%%%%%

\maketitle
\thispagestyle{fancy}

\newenvironment{explain}{\begin{list}{\LARGE\lightblue$\bullet$}{%
\setlength{\labelsep}{2.3mm}%
\setlength{\labelwidth}{22mm}%
\setlength{\leftmargin}{22mm}%
\setlength{\itemsep}{1mm}%
}}{\end{list}}

%!TEX root = doerfler.tex
%%%%%%%%%%%%%%%%%%%%%%%%%%%%%%%%%%%%%%%%%%%%%%%%%%%%%%%%%%%%%%%%%%%%%%%%%%%%%%%%%%%
%%%%%%%%%%%%%%%%%%%%%%%%%%%%%%%%%%%%%%%%%%%%%%%%%%%%%%%%%%%%%%%%%%%%%%%%%%%%%%%%%%%
\section{Introduction}
%%%%%%%%%%%%%%%%%%%%%%%%%%%%%%%%%%%%%%%%%%%%%%%%%%%%%%%%%%%%%%%%%%%%%%%%%%%%%%%%%%%
%%%%%%%%%%%%%%%%%%%%%%%%%%%%%%%%%%%%%%%%%%%%%%%%%%%%%%%%%%%%%%%%%%%%%%%%%%%%%%%%%%%

\noindent
In the last decade, the mathematical understanding of adaptive finite element methods (AFEM) has matured. For many elliptic model problems, one can mathematically prove that AFEM leads to optimal convergence behavior; see, e.g., \cite{doerfler1996,mns00,bdd04,stevenson2007,ckns2008} for some of the seminal works for symmetric problems,~\cite{mn2005,cn2012,ffp2014} for the extension to nonsymmetric problems, or to~\cite{cfpp2014} for some recent review on the state of the art.

Starting from an initial mesh $\TT_0$, the usual AFEM algorithms iterate the loop
\begin{align}\label{eq:semr}
 \boxed{\tt solve} 
 \quad \to \quad 
 \boxed{\tt estimate} 
 \quad \to \quad 
 \boxed{\tt mark} 
 \quad \to \quad 
 \boxed{\tt refine} 
\end{align}
The latter generates a sequence $(\TT_\ell)_{\ell \in \N_0}$ of successively refined meshes together with the associated FEM solutions $u_\ell$ and {\sl a~posteriori} error estimators $\eta_\ell = [ \sum_{T \in \TT_\ell} \eta_\ell(T)^2 ]^{1/2}$, where the index $\ell$ is the step counter of the adaptive loop. Formally, the algorithm reads as follows: For all $\ell = 0,1,2, \dots$, iterate the following steps:
\begin{explain}
 \item[$\boxed{\tt solve}$] Compute the FEM solution $u_\ell$ corresponding to $\TT_\ell$.
 \item[$\boxed{\tt estimate}$] Compute certain refinement indicators $\eta_\ell(T)$ for all $T \in \TT_\ell$.
 %of an {\sl a~posteriori} error estimator $\eta_\ell = \Big(\sum_{T \in \TT_\ell} \eta_\ell(T)^2 \Big)^{1/2}$.
 \item[\tt $\boxed{\tt mark}$] Determine a subset of elements $\MM_\ell \subseteq \TT_\ell$ for refinement.
 \item[\tt $\boxed{\tt refine}$] Generate a new mesh $\TT_{\ell+1}$ by refinement of (at least) all marked elements.
\end{explain}
%While most works assume the exact computation of $u_\ell$ (see, e.g.,~\cite{ckns2008,ffp2014,cfpp2014}), also the influence of the inexact solution by iterative solvers has recently been analyzed (see, e.g.,~\cite{stevenson2007,agl2013,alms2013,fhps2019}. 
%The module $\boxed{\tt estimate}$ computes the local contributions $\eta_\ell(T)$ of the {\sl a~posteriori} error estimator, where
%\begin{align}
% \eta_\ell = \Big(\sum_{T \in \TT_\ell} \eta_\ell(T)^2 \Big)^{1/2}.
%\end{align}
Usually, the set $\MM_\ell$ from $\boxed{\tt mark}$ then contains the elements with the largest contributions $\eta_\ell(T)$. Often (and, in particular, for the analysis of rate optimality~\cite{cfpp2014}), the D\"orfler marking criterion~\cite{doerfler1996} is used: Given $0 < \theta \le 1$, construct $\MM_\ell \subseteq \TT_\ell$ such that
\begin{align}\label{eq:Doerfler}
 \theta \eta_\ell^2 \le \sum_{T \in \MM_\ell} \eta_\ell(T)^2,
\end{align}
i.e., the marked elements control a fixed percentage $\theta$ of the overall error estimator. Clearly, one aims to choose the set $\MM_\ell$ with as few elements as possible.
%small as possible (with respect to its number of elements). 

As far as convergence of AFEM is concerned, also other marking criteria can be considered~\cite{msv2008,siebert2011}.
Current proofs of rate optimality of AFEM, however, rely on the (quasi-) minimal D\"orfler marking~\eqref{eq:Doerfler}, where the set $\MM_\ell$ has to be chosen with minimal cardinality (at least up to some $\ell$-independent generic constant); see~\cite{cfpp2014}. Moreover, when the focus comes to the overall computational cost of AFEM,
it is important that all steps of the adaptive algorithm can be performed at linear cost with respect to the number of elements $\#\TT_\ell$. This is usually a reasonable assumption if $\boxed{\tt solve}$ employs iterative solvers like PCG~\cite{fhps2019} or multigrid~\cite{stevenson2007}, and it requires appropriate data structures for $\boxed{\tt estimate}$ and $\boxed{\tt refine}$.

If $\boxed{\tt mark}$ aims for a set $\MM_\ell$, which satisfies~\eqref{eq:doerfler} with minimal cardinality, then linear cost is less obvious: The work~\cite{doerfler1996} notes that a possible strategy is to sort the indicators, which, however, results in log-linear costs.
%\footnote{\color{cyan}GECHECKT: D\"orfler schl\"agt auch noch einen anderen (linear complexity) Algorithmus vor, der aber schlechte Approximationen liefern kann. Z.B. f\"ur $x_j = x_i$ f\"ur alle $i,j$ werden alle Elemente markiert.}
Instead, the work~\cite{stevenson2007} employs an approximate sorting by binning. While this leads to linear costs, the resulting set $\MM_\ell$ has only minimal cardinality up to a multiplicative factor $2$, and \cite[Section~5]{stevenson2007} notes:
\begin{quote}
\emph{Selecting $\MM_\ell$ that satisfies~\eqref{eq:Doerfler} with true minimal cardinality would require sorting all $T \in \TT_\ell$ by the values of $\eta_\ell(T)$, which takes $\OO(N\log N)$ operations.}
\end{quote}
The present work bridges the approaches of~\cite{doerfler1996,stevenson2007} and proves that the latter statement is wrong: Based on ideas of the (Quick-) Selection algorithm~\cite{hoare1961_find}, we present a linear-cost algorithm for $\boxed{\tt mark}$, which provides a set $\MM_\ell \subseteq \TT_\ell$, which satisfies the D\"orfler criterion~\eqref{eq:Doerfler} with minimal cardinality. 

The outline of the present work reads as follows: In Section~\ref{section:doerfler}, we formulate the D\"orfler marking and briefly discuss the algorithms from~\cite{doerfler1996,stevenson2007}. 
In Section~\ref{section:main}, we present and analyze our new approach for $\boxed{\tt mark}$ named \alg{QuickMark}.
Section~\ref{section:implementation} concludes with a \texttt{C++11} STL-based implementation of the new algorithm.

%%%%%%%%%%%%%%%%%%%%%%%%%%%%%%%%%%%%%%%%%%%%%%%%%%%%%%%%%%%%%%%%%%%%%%%%%%%%%%%%%%%
%%%%%%%%%%%%%%%%%%%%%%%%%%%%%%%%%%%%%%%%%%%%%%%%%%%%%%%%%%%%%%%%%%%%%%%%%%%%%%%%%%%
\section{D\"orfler marking}
\label{section:doerfler}
%%%%%%%%%%%%%%%%%%%%%%%%%%%%%%%%%%%%%%%%%%%%%%%%%%%%%%%%%%%%%%%%%%%%%%%%%%%%%%%%%%%
%%%%%%%%%%%%%%%%%%%%%%%%%%%%%%%%%%%%%%%%%%%%%%%%%%%%%%%%%%%%%%%%%%%%%%%%%%%%%%%%%%%

%!TEX root = doerfler.tex
%%%%%%%%%%%%%%%%%%%%%%%%%%%%%%%%%%%%%%%%%%%%%%%%%%%%%%%%%%%%%%%%%%%%%%%%%%%%%%%%%%%
\subsection{Setting}\label{section:setting}
%%%%%%%%%%%%%%%%%%%%%%%%%%%%%%%%%%%%%%%%%%%%%%%%%%%%%%%%%%%%%%%%%%%%%%%%%%%%%%%%%%%
Let $0 < \theta < 1$ and $\II := \{ 1, \dots, N \}$. Given a vector $x \in \R^N_\star := \set{x \in \R^N\backslash\{0\}}{x_j \ge 0 \text{ for all } j \in \II}$, an index set $\MM \subseteq \II$ satisfies the \emph{D\"orfler criterion}, if
\begin{align}\label{eq:doerfler}
 \theta \sum_{j \in \II} x_j \le \sum_{j \in \MM} x_j.
\end{align}
By $\#\MM$, we denote the number of elements in $\MM$.
Let $\Nmin := \min \set{\#\MM}{\MM \subseteq \II \text{ satisfies } \eqref{eq:doerfler}}$ denote the minimal number of indices which are required to satisfy the D\"orfler criterion~\eqref{eq:doerfler}. We note that the minimizing set is not unique in general, e.g., if $x_i = x_j$ for all $i, j \in \II$ and $0<\theta\le (N-1)/N$.

\begin{remark}
For $\theta=1$, the set $\MM \subseteq \II$ of minimal cardinality satisfying \eqref{eq:doerfler} is unique and given by $\MM := \{j \in \II \colon x_j > 0\}$.
Clearly, this set can be determined at linear costs.
\end{remark}

We say that an algorithm realizes the \emph{minimal D\"orfler marking}, if, for all $0 < \theta < 1$, for all $N \in \N$, and for all $x \in \R^N_\star$, the algorithm constructs a set $\MM \subseteq \II$, which satisfies~\eqref{eq:doerfler} with $\#\MM = N_{\rm min}$. 
We say that an algorithm realizes the \emph{quasi-minimal D\"orfler marking}, if, for all $0 < \theta < 1$, there exists a constant $C \ge 1$ such that, for all $N \in \N$ and for all $x \in \R^N_\star$, the algorithm constructs a set $\MM \subseteq \II$, which satisfies~\eqref{eq:doerfler} with $\#\MM \le C \, N_{\rm min}$.

For current proofs of rate optimality of AFEM, the marking algorithm has to realize the quasi-minimal D\"orfler marking~\cite{cfpp2014}, while available results on optimal computational costs require also that the marking step has linear costs~\cite{stevenson2007,ghps2018,fhps2019}.

%!TEX root = doerfler.tex
%\newpage
%%%%%%%%%%%%%%%%%%%%%%%%%%%%%%%%%%%%%%%%%%%%%%%%%%%%%%%%%%%%%%%%%%%%%%%%%%%%%%%%%%%
\subsection{Minimal D\"orfler marking based on sorting}
%%%%%%%%%%%%%%%%%%%%%%%%%%%%%%%%%%%%%%%%%%%%%%%%%%%%%%%%%%%%%%%%%%%%%%%%%%%%%%%%%%%
It is already noted in~\cite{doerfler1996} that a set $\MM \subseteq \II$, which satisfies~\eqref{eq:doerfler} as well as $\#\MM = \Nmin$, can easily be constructed by sorting.

\begin{algorithm}\label{algorithm:doerfler}
For the setting from Section~\ref{section:setting}, perform the following steps {\rm(i)--(iii)}:
\begin{itemize}
\item[\rm(i)] Determine a permutation $\pi: \II \to \II$ such that $x_{\pi(1)} \ge x_{\pi(2)} \ge \dots \ge x_{\pi(N)}$.
\item[\rm(ii)] Compute $v := \theta \, \sum_{j = 1}^N x_j$.
\item[\rm(iii)] Determine the minimal index $n \in \{1, \dots, N\}$ such that $v \le \sum_{i = 1}^n x_{\pi(i)}$.
\end{itemize}
\textbf{Output:} $\MM := \{\pi(1), \dots, \pi(n)\}$
\end{algorithm}

In practice, step~(i) of Algorithm~\ref{algorithm:doerfler} will be performed by sorting the vector $x \in \R^N_\star$.
This leads to $\OO(N \log N)$ operations for, e.g., the Introsort algorithm~\cite{musser1997}.

\begin{proposition}
The set $\MM$ generated by Algorithm~\ref{algorithm:doerfler} satisfies~\eqref{eq:doerfler} as well as $\#\MM = \Nmin$, i.e., Algorithm~\ref{algorithm:doerfler} realizes the minimal D\"orfler marking.
Up to step~{\rm(i)}, the computational cost of Algorithm~\ref{algorithm:doerfler} is linear.
\end{proposition}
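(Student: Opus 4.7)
The plan is to verify three claims separately: (a) the output set $\MM$ satisfies~\eqref{eq:doerfler}; (b) $\#\MM=\Nmin$; and (c) steps~(ii)--(iii) together run in $\OO(N)$ time. Items~(a) and~(c) follow essentially by construction and by counting operations, respectively, so the only claim requiring a genuine argument is the minimality in~(b).

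For~(a), I would first observe that step~(iii) is well defined: the full sum $\sum_{i=1}^N x_{\pi(i)} = \sum_{j \in \II} x_j$ is at least $v = \theta \sum_{j \in \II} x_j$ because $\theta < 1$ and $x \in \R^N_\star$, so $n = N$ always satisfies the defining inequality and a minimal such $n$ exists. By the choice of $n$ in step~(iii), one then has $\theta \sum_{j \in \II} x_j = v \le \sum_{i=1}^n x_{\pi(i)} = \sum_{j \in \MM} x_j$, which is exactly~\eqref{eq:doerfler}. For~(c), step~(ii) needs $N$ additions. Step~(iii) can be implemented by accumulating partial sums $S_k := \sum_{i=1}^k x_{\pi(i)}$ incrementally via $S_k = S_{k-1} + x_{\pi(k)}$ and stopping at the first $k$ with $S_k \ge v$, at a cost of at most $N$ additions and $N$ comparisons.

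The central piece is~(b). My plan is to invoke the elementary rearrangement bound: since $\pi$ orders the entries in decreasing order, for every $m \in \{1,\dots,N\}$ and every $\MM' \subseteq \II$ with $\#\MM' = m$, one has
\[
  \sum_{j \in \MM'} x_j \;\le\; \sum_{i=1}^m x_{\pi(i)}.
\]
Assuming, towards a contradiction, that there exists $\MM'$ satisfying~\eqref{eq:doerfler} with $m := \#\MM' < n$, and using the minimality of $n$ in step~(iii) (which gives $\sum_{i=1}^{n-1} x_{\pi(i)} < v$) together with $m \le n-1$, one obtains
\[
  v \;\le\; \sum_{j \in \MM'} x_j \;\le\; \sum_{i=1}^m x_{\pi(i)} \;\le\; \sum_{i=1}^{n-1} x_{\pi(i)} \;<\; v,
\]
a contradiction. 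Hence $\Nmin \ge n = \#\MM$, and combined with $\Nmin \le \#\MM$ from~(a), this yields $\#\MM = \Nmin$. I do not anticipate any real obstacle here: the rearrangement bound is precisely the reason for sorting in step~(i), and it is exactly this observation that fails once the sort is replaced by cheaper approximate orderings such as the binning approach of~\cite{stevenson2007}.
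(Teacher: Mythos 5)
Your proof is correct and follows essentially the same approach as the paper's: combine the minimality of $n$ in step~(iii) (giving $\sum_{i=1}^{n-1} x_{\pi(i)} < v$) with the observation that, after sorting, the partial sums $\sum_{i=1}^m x_{\pi(i)}$ dominate any cardinality-$m$ subset sum, to rule out $\#\MM' < n$. The only difference is that you spell out this rearrangement bound explicitly and check well-definedness of step~(iii), whereas the paper leaves the former implicit in a compressed chain of inequalities.
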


\begin{proof}
Let $\MM_{\rm min} \subseteq \II$ satisfy~\eqref{eq:doerfler} with $\#\MM_{\rm min} = N_{\rm min}$. By construction of $\MM = \{ x_{\pi(1)}, \dots, x_{\pi(n)} \}$, it holds that
\begin{align*}
 \sum_{i=1}^{n-1} x_{\pi(i)} 
 < v 
 \le \sum_{j \in \MM_{\rm min}} x_j
 \le \sum_{i=1}^{n} x_{\pi(i)} \,. 
\end{align*}
Hence, we see that $n-1 < \#\MM_{\rm min} = N_{\rm min} \le n$. This implies that $n = N_{\rm min}$. It is obvious that step~(ii)--(iii) of Algorithm~\ref{algorithm:doerfler} have linear cost $\OO(N)$.
% This concludes the proof.
\end{proof}

%\newpage
%%%%%%%%%%%%%%%%%%%%%%%%%%%%%%%%%%%%%%%%%%%%%%%%%%%%%%%%%%%%%%%%%%%%%%%%%%%%%%%%%%%
\subsection{D\"orfler marking without sorting}
%%%%%%%%%%%%%%%%%%%%%%%%%%%%%%%%%%%%%%%%%%%%%%%%%%%%%%%%%%%%%%%%%%%%%%%%%%%%%%%%%%%

To avoid sorting, the work~\cite{doerfler1996} proposes (a variant of) the following algorithm; see~\cite[Section~5.2]{doerfler1996}.

\begin{algorithm}\label{algorithm:doerfler3}
For the setting from Section~\ref{section:setting} and given $0 < \nu <1$, perform the following steps {\rm(i)--(vi)}:
\begin{itemize}
\item[\rm(i)] Initialize $n := 0$, and $\pi(j) := 0$ for all $j = 1,\dots,N$.
\item[\rm(ii)] Compute $v := \theta \, \sum_{j = 1}^N x_j$ and $M := \max_{i = 1, \dots, N} x_i$.
\item[\rm(iii)] For $k =1, 2, 3,\dots, \lceil 1/\nu \rceil$, iterate the following steps:
\begin{itemize}
 \item[\rm(iv)] For all $i = 1, \dots, N$ with $i \not\in \set{\pi(j)}{j=1,\dots,n}$, iterate the following steps:
 \begin{itemize}
  \item[\rm(v)] if $x_i > (1-k\nu) M$, then define $\pi(n+1) := i$ and update $n \mapsto n+1$
  \item[\rm(vi)] if $v \le \sum_{j = 1}^n x_{\pi(j)}$, then terminate.
 \end{itemize}
\end{itemize}
\end{itemize}
\textbf{Output:} $\MM := \{\pi(1), \dots, \pi(n)\}$
\end{algorithm}

\begin{remark}
The algorithm proposed in \cite[Section~5.2]{doerfler1996} has the stopping criterion~{\rm(vi)} as part of step~{\rm(iii)}, i.e., steps~{\rm(iv)--(v)} are iterated, until $v\le\sum_{j=1}^N x_{\pi(j)}$.
If $x$ is constant, i.e., $x_j=c>0$ for all $j\in\II$, then this variant leads to $\MM=\II$ for all $0<\theta\le 1$ and hence does not realize quasi-minimal D\"orfler marking.
Our formulation of Algorithm~\ref{algorithm:doerfler3} excludes such a simple counterexample.
\end{remark}

\begin{proposition}\label{prop:doerfler3}
Algorithm~\ref{algorithm:doerfler3} terminates after finitely many steps. The computational cost of Algorithm~\ref{algorithm:doerfler3} is $\OO(N/\nu)$. The set $\MM$ generated by Algorithm~\ref{algorithm:doerfler3} realizes~\eqref{eq:doerfler}, but it is not quasi-minimal in general.
\end{proposition}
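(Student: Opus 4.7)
The plan is to dispatch the three positive assertions by direct inspection of the loop structure and then exhibit a family of inputs witnessing non-quasi-minimality. The first three parts are bookkeeping; the real obstacle is the counterexample.

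First, for termination and complexity, the outer loop in step~(iii) executes exactly $\lceil 1/\nu \rceil$ times and each inner loop in step~(iv) makes at most $N$ passes over the indices. With an auxiliary Boolean array flagging the indices already in $\{\pi(1),\ldots,\pi(n)\}$ and a running sum $\sigma$ updated by $\sigma \gets \sigma + x_i$ whenever an index is added, both the exclusion test in step~(iv) and the stopping test in step~(vi) take $O(1)$ time. This gives the bound $O(N \lceil 1/\nu\rceil) = O(N/\nu)$ and automatic termination.

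Second, for the D\"orfler property, I would argue that if the algorithm does not stop before the last outer iteration $k = \lceil 1/\nu \rceil$, it must stop there. Indeed at this $k$ the threshold $(1-k\nu)M$ is non-positive, so every $i$ with $x_i > 0$ satisfies the test in step~(v); thus every index still absent from $\pi$ is added, and by the time the sweep is complete the running sum has reached $\sum_{j=1}^N x_j \ge \theta \sum_{j=1}^N x_j = v$, so the test in step~(vi) has fired. In particular, the output $\MM$ satisfies~\eqref{eq:doerfler}.

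The real work is the counterexample. The exploitable point is that the inner loop processes $i = 1, 2, \ldots, N$ in a fixed order and checks the stopping criterion after each step; hence one can place a moderate entry at index $N$ and flood the intermediate indices with many tiny entries whose cumulative contribution already triggers termination before $i = N$ is ever reached. Concretely, I would fix $\theta = \nu = 1/2$ and, for each $N \ge 5$, take
\begin{align*}
 x := \Bigl(1,\;\underbrace{\tfrac{1}{N-2},\ldots,\tfrac{1}{N-2}}_{N-2 \text{ entries}},\;\tfrac{1}{2}\Bigr) \in \R^N_\star,
\end{align*}
so that $M = 1$, $\sum_{j=1}^N x_j = 5/2$, and $v = 5/4$. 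Iteration $k = 1$ uses threshold $1/2$ and admits only $x_1 = 1$ (the small entries satisfy $1/(N-2) \le 1/2$ and $x_N = 1/2$ fails the strict inequality), leaving a running sum of $1 < v$. Iteration $k = 2$ uses threshold $0$ and then walks through $i = 2, 3, \ldots$ in order, producing running sums $1 + m/(N-2)$; the criterion fires once $m = \lceil(N-2)/4\rceil$, so $\#\MM = 1 + \lceil(N-2)/4\rceil$. On the other hand, $\{1, N\}$ gives partial sum $3/2 \ge 5/4$, so $\Nmin = 2$, whence $\#\MM / \Nmin = \Theta(N)$ as $N \to \infty$; no $\theta$-dependent constant $C$ can satisfy $\#\MM \le C \Nmin$ uniformly, and this rules out quasi-minimality of Algorithm~\ref{algorithm:doerfler3}.
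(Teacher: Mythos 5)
Your treatment of the first three assertions (termination, $\OO(N/\nu)$ cost, validity of~\eqref{eq:doerfler}) is essentially identical to the paper's argument and is correct: the bookkeeping with a running sum and a membership flag, plus the observation that the threshold becomes non-positive at the final outer iteration $k=\lceil 1/\nu\rceil$, is exactly what the paper does.

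The counterexample is where the two proofs diverge, and your version is genuinely simpler: you place a single moderate entry $x_N=1/2$ at the end of the array and flood the middle with $N-2$ tiny equal entries, whereas the paper uses a three-value vector $(1,\varepsilon,\dots,\varepsilon,\delta,\dots,\delta)$ with $CR$ copies of $\varepsilon$ and $R-1$ copies of $\delta$, together with explicit, rather intricate, formulas for $\varepsilon$, $\delta$, and $R$. Your arithmetic checks out: $M=1$, $v=5/4$, only index $1$ survives iteration $k=1$, iteration $k=2$ sweeps in index order and stops after $m=\lceil(N-2)/4\rceil$ tiny entries, while $\{1,N\}$ already satisfies~\eqref{eq:doerfler}, so $\Nmin=2$ and the ratio grows like $N$.

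The one real shortcoming is generality with respect to $\nu$. You fix $\nu=1/2$, but $\nu$ is a parameter \emph{of the algorithm}, not of the input, and the paper's proof deliberately treats arbitrary $0<\nu<1$ (and arbitrary $0<\theta<1$, though a single bad $\theta$ would suffice to negate quasi-minimality). As written, your proof only establishes that Algorithm~\ref{algorithm:doerfler3} \emph{with} $\nu=1/2$ fails to be quasi-minimal; it leaves open whether some other $\nu$ might rescue the algorithm. Your construction does generalize if you set the moderate tail entry proportional to the smallest positive threshold $\tau:=1-(\lceil 1/\nu\rceil-1)\nu$, e.g.\ $x_N=\tau/2$, keep $(N-2)\varepsilon=1$, and take $N$ large enough that $\varepsilon<\tau/2$; this yields $\#\MM=1+\lceil(N-2)\tau/4\rceil$ while $\Nmin=2$, and the ratio again diverges. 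You should carry out that parametrization (or at least say why the specific choice $\nu=1/2$ suffices under your reading of the proposition) to close the gap. Fixing $\theta=1/2$, by contrast, is perfectly fine: to negate the quantifier structure of the quasi-minimality definition you only need one bad $\theta$, and the paper's ``for all $\theta$'' is a bonus, not a necessity.
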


\begin{proof}
Steps~(i)--(ii) have linear costs $\OO(N)$.
Obviously, if in step~(vi) the sum is rather updated than recomputed, step~(iii)--(vi) lead to total costs $\OO(N/\nu)$ for Algorithm~\ref{algorithm:doerfler3}.
To see that $\MM$ satisfies~\eqref{eq:doerfler}, note that (at latest) for $k =  \lceil 1 / \nu \rceil$, it holds that $k\nu \ge 1$ and hence $x_i > (1 - k\nu) M$ is satisfied for all $x_i \neq 0$.
It only remains to show that Algorithm~\ref{algorithm:doerfler3} does not realize the quasi-minimal D\"orfler marking.

Let $0 < \theta < 1$ and $0 < \nu < 1$ be arbitrary.
We aim to show that for any constant $C\ge 1$, there exist $N\in\N$ and $x \in \R^N_\star$ such that the set $\MM$ generated by Algorithm~\ref{algorithm:doerfler3} satisfies $\#\MM > C N_{\rm min}$.
%In particular we construct a counterexample for any such constant $C\ge 1$.
Without loss of generality, we may assume $C\in\N$.
The idea now is the following:
\begin{itemize}
  \item For some $R\in\N$ and $\varepsilon, \delta>0$, define the vector $x \in \R^N_\star$ of the form
  \begin{align*}
  x = (1,\,\, \underbrace{\varepsilon, \dots, \varepsilon}_{CR\text{ times}},\,\, \underbrace{\delta, \dots, \delta}_{R-1\text{ times}})\in\R^N\,, \quad\text{i.e.,}\quad
  x_j :=
  \begin{cases}
  1 &\text{if } j=1\,, \\
  \varepsilon &\text{if } 2 \le j \le N-R+1\,,\\
  \delta &\text{if } N-R+2 \le j \le N\,.
  \end{cases}
  \end{align*}
  \item Then, choose $0<\varepsilon\ll\delta\ll1$ and $R\in\N$ such that $\MM'=\{1\} \cup \{N-R+2, N-R+3,\dots, N\}$ satisfies \eqref{eq:doerfler}, but neither $\MM''=\{1\}$ nor $\MM''=\{1, \dots, CR+1\}$ do.
  \item If moreover $\delta$ and $\varepsilon$ are chosen such that the condition $x_i > (1-k\nu)M$ in Step~(v) of Algorithm~\ref{algorithm:doerfler3} is not satisfied for any of the indices $i=2,\dots, N$ and any of the loop iterations $k=1, \dots, \lceil 1/\nu \rceil -1$ of Step~(iii), then for the last loop iteration $k=\lceil 1/\nu\rceil$, starting from the index $i=2$, all indices $i=2, 3, \dots$ will be added to $\MM$ until \eqref{eq:doerfler} is satisfied.
  \item Now, if $\varepsilon>0$ is chosen small enough, then the set $\MM$ returned by Algorithm~\ref{algorithm:doerfler3} will be a superset of $\{1, \dots, CR+1\}$, i.e., $\#\MM > CR$.
  \item Since $\MM'=\{1\} \cup \{N-R+2, N-R+3, \dots, N\}$ satisfies \eqref{eq:doerfler}, it holds that $N_{\rm min} \le \#\MM' = R$ and hence $\#\MM > CR \ge CN_{\rm min}$.
\end{itemize}
It remains to define $\delta, \varepsilon$, and $R$ such that the desired properties hold.
Define
\begin{align*}
\delta &:= \left\lceil\left(1 - \nu\left(\left\lceil 1/\nu\right\rceil -1\right)\right)^{-1}\right\rceil^{-1}\,,\quad R := \left\lceil(2-\theta)/\theta\right\rceil/\delta + 1\,,\quad N := (C+1)R\,, \\
\varepsilon &:= (CR)^{-1}\min\left\{1, (1-\theta)\left(1+\left\lceil(2-\theta)/\theta\right\rceil\right)/\theta\right\} \,;
\end{align*}
Note that $1/\nu > \lceil 1/\nu\rceil -1$ implies that $\delta>0$.
First, note that
\begin{align*}
\theta\sum_{j=1}^N x_j 
&= \theta + \theta CR\,\varepsilon + \theta (R-1)\,\delta
= \theta + \theta CR\,\varepsilon + \theta\left\lceil(2-\theta)/\theta\right\rceil \\
&\le \theta + (1-\theta)\left(1+\left\lceil(2-\theta)/\theta\right\rceil\right) + \theta\left\lceil(2-\theta)/\theta\right\rceil \\
&= 1 + \left\lceil(2-\theta)/\theta\right\rceil = 1 + (R-1)\delta = \sum_{j\in\MM'} x_j \,.
\end{align*}
Hence, $\MM'$ satisfies \eqref{eq:doerfler} and therefore $N_{\rm min} \le \#\MM' = R$.
Next, we claim that Algorithm~\ref{algorithm:doerfler3} will construct a set $\MM\supsetneqq\{1, \dots, CR+1\}$, which thus contains more than $CR$ indices:
Observe that
\begin{align*}
0 &< \varepsilon \le (CR)^{-1} \le C^{-1}\delta\left\lceil(2-\theta)/\theta\right\rceil^{-1} \le C^{-1}\delta\theta/(2-\theta) < \delta \\
&= \left\lceil\left(1 - \nu\left(\left\lceil 1 / \nu\right\rceil -1\right)\right)^{-1}\right\rceil^{-1} \le 1 - \nu(\lceil 1/\nu\rceil - 1) \,.
\end{align*}
This proves that $0 < \varepsilon < \delta \le 1 - \nu(\lceil(1/\nu\rceil-1)$. 
Together with $M=x_1=1$, this implies that the condition $x_i > (1-k\nu)M$ in Step~(v) of Algorithm~\ref{algorithm:doerfler3} will not be satisfied for any $i \ge 2$ before the last iteration of the loop in Step~(iii) of Algorithm~\ref{algorithm:doerfler3} (i.e., before $k = \lceil 1/\nu\rceil$).
Thus, for $k < \lceil 1/\nu\rceil$, we have $\pi(1) = 1$, $n=1$, and $\pi(j) = 0$ for all $j = 2, \dots, N$.
Note, that 
\begin{align*}
\theta\sum_{j=1}^N x_j &= \theta + \theta CR\,\varepsilon + \theta (R-1)\,\delta \\
&> \theta + \theta (R-1)\,\delta = \theta + \theta\left\lceil(2-\theta)/\theta\right\rceil \ge 2 \ge 1 + CR\,\varepsilon = \sum_{j=1}^{CR+1} x_j \,.
\end{align*}
Consequently, after the last iteration of the $k$-loop it holds that $\pi(j) = j$ for all $j=1, \dots, CR +2$ and $n \ge CR+2$.
Hence, the set $\MM$ returned by Algorithm~\ref{algorithm:doerfler3} satisfies $\#\MM = n \ge CR+2 > CR$.
This concludes the proof.
\end{proof}

%!TEX root = doerfler.tex
%%%%%%%%%%%%%%%%%%%%%%%%%%%%%%%%%%%%%%%%%%%%%%%%%%%%%%%%%%%%%%%%%%%%%%%%%%%%%%%%%%%
\subsection{Quasi-minimal D\"orfler marking with linear complexity by binning}
%%%%%%%%%%%%%%%%%%%%%%%%%%%%%%%%%%%%%%%%%%%%%%%%%%%%%%%%%%%%%%%%%%%%%%%%%%%%%%%%%%%

The following strategy has been proposed in the seminal work~\cite{stevenson2007}, which gave the first optimality proof for a standard AFEM loop of type~\eqref{eq:semr} for the 2D Poisson problem.
The main observation is the following:
If the reduction of the threshold in step~(v) of Algorithm~\ref{algorithm:doerfler3} is done by multiplication instead of subtraction, then the resulting algorithm satisfies the quasi-minimal D\"orfler marking.
While~\cite[Section~5]{stevenson2007} outlines the proposed strategy for the choice $\nu=1/2$, we work out all details in our proof of Proposition~\ref{prop:stevenson}.

\begin{algorithm}\label{algorithm:stevenson}
For the setting from Section~\ref{section:setting} and given $0 < \nu <1$, perform the following steps {\rm(i)--(v)}:
\begin{itemize}
\item[\rm(i)] Compute $v := \theta \sum_{i = 1}^N x_j$ and $M := \max_{j = 1, \dots, N} x_j$.
\item[\rm(ii)] Determine the minimal $K \in \N_0$ with $\nu^{K+1}M \le \frac{1-\theta}{\theta} \, v/N$. 
\item[\rm(iii)] For $k = 0, \dots, K$, fill bins $\BB_k := \set{j \in \II}{\nu^{k+1} < x_j/M \le \nu^k}$ and define $\BB_{K+1} := \II \backslash \bigcup_{k=0}^{K} \BB_k$.
\item[\rm(iv)] This yields a permutation $\pi: \II \to \II$ such that 
\begin{itemize}
\item[$\bullet$] $x_{\pi(i)} > x_{\pi(j)}$ for all $i \in \BB_I$ and $j \in \BB_J$ with $I < J$.
\end{itemize}
\item[\rm(v)] Determine the minimal index $n \in \{1, \dots, N\}$ such that $v \le \sum_{i = 1}^n x_{\pi(i)}$.
\end{itemize}
\textbf{Output:} $\MM := \{\pi(1), \dots, \pi(n)\}$
\end{algorithm}

\begin{proposition}\label{prop:stevenson}
For arbitrary $0 < \nu < 1$, Algorithm~\ref{algorithm:stevenson} terminates after finitely many steps. The constructed set $\MM \subseteq \II$ satisfies~\eqref{eq:doerfler} with $\#\MM \le \lceil\nu^{-1} N_{\rm min}\rceil$.
Moreover, a proper implementation of Algorithm~\ref{algorithm:stevenson} leads to a total computational cost of $\OO\big(N + K\big)$ with $K = \OO\big(\log_{1/\nu}(N/(1-\theta))\big)$.
\end{proposition}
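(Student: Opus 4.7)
The plan is to verify the three claims in sequence---finiteness and size of $K$, validity of the D\"orfler criterion, and the cardinality bound---and then read off the cost from the structure of the algorithm.

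For the size of $K$, I would rearrange the defining inequality in step~(ii): using $M \le \sum_{j\in\II} x_j = v/\theta$, the inequality $\nu^{K+1} M \le (1-\theta) v/(\theta N)$ is implied by $\nu^{K+1} \le (1-\theta)/N$, so it suffices to take $K \le \lceil \log_{1/\nu}(N/(1-\theta)) \rceil$; in particular, $K$ is finite and of the claimed order. To verify the D\"orfler criterion, I would show that the tail bin $\BB_{K+1}$ carries only negligible mass: every $j\in\BB_{K+1}$ satisfies $x_j \le \nu^{K+1} M \le (1-\theta) v/(\theta N)$, hence $\sum_{j\in\BB_{K+1}} x_j \le (1-\theta) v/\theta$. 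Since $\sum_{j\in\II} x_j = v/\theta$, the bins $\BB_0, \dots, \BB_K$ together sum to at least $v$, so step~(v) succeeds with a valid index $n$ and the output $\MM$ satisfies~\eqref{eq:doerfler}.

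The main obstacle is the cardinality bound. Here I would compare $\MM$ to the greedy sorted minimizer $\MM_{\min}^{\text{sort}} := \{\sigma(1),\dots,\sigma(\Nmin)\}$, where $x_{\sigma(1)} \ge \dots \ge x_{\sigma(N)}$, and identify the bin $\BB_{k_0}$ containing the critical element $x_{\sigma(\Nmin)}$. Setting $a := \#(\BB_0 \cup \dots \cup \BB_{k_0-1})$ and $b' := \Nmin - a$, every element in those bins strictly exceeds $x_{\sigma(\Nmin)}$ and therefore belongs to $\MM_{\min}^{\text{sort}}$, yielding $a \le \Nmin$, i.e.~$b' \ge 1$. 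Let $b$ denote the number of elements the algorithm draws from $\BB_{k_0}$. The key estimate is the two-sided control of $b$: on one hand, the $b'$ elements of $\MM_{\min}^{\text{sort}}$ that lie in $\BB_{k_0}$ each contribute at most $\nu^{k_0} M$, so the outstanding mass $v - \sum_{j\in\BB_0\cup\dots\cup\BB_{k_0-1}} x_j$ is at most $b'\nu^{k_0} M$; on the other hand, each of the first $b-1$ elements drawn from $\BB_{k_0}$ contributes strictly more than $\nu^{k_0+1}M$, and their total sum must still be below that outstanding mass. Combining yields $(b-1)\nu^{k_0+1}M < b'\nu^{k_0}M$, hence $b \le \lceil b'/\nu\rceil$. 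Since $a$ is an integer and $0<\nu<1$, I can conclude
\[
\#\MM \;=\; a + b \;\le\; a + \lceil b'/\nu\rceil \;=\; \lceil a + b'/\nu\rceil \;\le\; \lceil (a+b')/\nu\rceil \;=\; \lceil \Nmin/\nu\rceil,
\]
using $a \le a/\nu$ in the second-to-last step.

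For the cost, steps~(i) and~(v) are single linear scans of $\II$, step~(ii) performs $K+1$ multiplications and comparisons at cost $\OO(K)$, and step~(iii) can be implemented at $\OO(1)$ per element, e.g.~by computing the bin index $k_j := \lfloor \log_{1/\nu}(M/x_j)\rfloor$ (or, equivalently, by precomputing the thresholds $\nu^k M$ once in $\OO(K)$ and using a constant-time lookup). Summing yields the stated total $\OO(N+K)$, with $K$ of the order derived above.
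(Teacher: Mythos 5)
Your proof is correct and follows essentially the same strategy as the paper: bound the mass in the tail bin $\BB_{K+1}$ to establish validity of the D\"orfler criterion, then split $\MM$ and a minimal set along the critical bin and use the $\nu$-ratio between the endpoints of that bin to control the overshoot. The one genuine stylistic difference is that you make the comparison set explicit, namely the sorted greedy minimizer $\MM^{\text{sort}}_{\min}=\{\sigma(1),\dots,\sigma(\Nmin)\}$ and the bin $\BB_{k_0}$ containing $x_{\sigma(\Nmin)}$, whereas the paper defines its $k_0$ from the algorithm's output $\MM$ and then simply \emph{asserts} the existence of some minimal set of the form $\SS\cup\bigcup_{k\le k_0}\BB_k$ with $\SS\subseteq\BB_{k_0+1}$; your version is the constructive justification of that assertion. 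Both versions tacitly use the (true, but worth stating) fact that the bin in which the greedy minimizer stops is the same bin in which the algorithm's cumulative sum first reaches $v$ — without this, your identity $\#\MM=a+b$ does not follow — and you should insert a line verifying it: since $\MM^{\text{sort}}_{\min}\subseteq\BB_0\cup\dots\cup\BB_{k_0}$, this union already has mass $\ge v$, so the algorithm stops no later than $\BB_{k_0}$; conversely, minimality of $\Nmin$ gives $\sum_{j\in\BB_0\cup\dots\cup\BB_{k_0-1}}x_j<v$, so it stops no earlier. Two tiny quibbles: the assertion ``$a\le\Nmin$, i.e.\ $b'\ge 1$'' should read $a\le\Nmin-1$ (every element of $\BB_0\cup\dots\cup\BB_{k_0-1}$ is \emph{strictly} larger than $x_{\sigma(\Nmin)}$, hence lies among the first $\Nmin-1$ sorted positions); and the parenthetical alternative for step~(iii), looking up a precomputed threshold table, is not $\OO(1)$ per element without additional structure (a binary search would be $\OO(\log K)$), whereas your primary suggestion via $\lfloor\log_{1/\nu}(M/x_j)\rfloor$ is fine.
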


\begin{proof}
The only non-obvious statement is the bound $\#\MM \le \lceil\nu^{-1} N_{\rm min}\rceil$:
For $j \in \BB_{K+1}$, it holds that $x_j \le \nu^{K+1}M \le \frac{1-\theta}{\theta} \, v/N$ and hence
\begin{align*}
  v + \frac{N}{\#\BB_{K+1}}\sum_{j\in\BB_{K+1}} x_j \le \frac{v}{\theta} = \sum_{j\in\II\setminus\BB_{K+1}} x_j + \sum_{j\in\BB_{K+1}} x_j \,.
\end{align*}
Since $\#\BB_{K+1} \le N$ and $x_j \ge 0$ for all $j \in \II$, it follows that $\bigcup_{k=0}^{k_0}\BB_k=\II\setminus\BB_{K+1}$ satisfies~\eqref{eq:doerfler}.
Let $k_0 \in \N_0$ be the largest index such that $\BB_{k_0} \subseteq \MM$.
If no such index exists, i.e., $\BB_0 \supsetneqq \MM$, define $k_0 := -1$.
Clearly, it holds that $k_0 \le K$ and $\bigcup_{k=0}^{k_0} \BB_k \subseteq \MM \subseteq \bigcup_{k=0}^{k_0+1} \BB_k$.
Further, there exists $\SS \subseteq \BB_{k_0+1}$ such that $\SS \cup \bigcup_{k=0}^{k_0}\BB_k$ satisfies \eqref{eq:doerfler} with minimal cardinality $N_{\rm min}$.

To show $\#\MM \le \lceil\nu^{-1} N_{\rm min}\rceil$, it suffices to show that $\RR := \MM \cap \BB_{k_0+1}$ satisfies $\#\RR \le \lceil\nu^{-1} \#\SS\rceil$.
Consider $\#\RR > 0$.
Then, $k_0 < K$, $\pi(n) \in \RR$, and with $v' := v - \sum_{k=0}^{k_0} \sum_{j\in\BB_k} x_j$, it holds that
\begin{align*}
\nu^{k_0+2}M(\#\RR -1) < \sum_{j\in\RR\setminus\{\pi(n)\}} x_j < v' \le \sum_{j\in\SS} x_j \le \nu^{k_0+1}M\#\SS \,.
\end{align*}
It immediately follows, that $\#\RR \le \lceil\nu^{-1}\#\SS\rceil$.
Altogether, $\MM$ satisfies~\eqref{eq:doerfler} with $\#\MM \le \lceil\nu^{-1} N_{\rm min}\rceil$.
\end{proof}

%!TEX root = doerfler.tex
%%%%%%%%%%%%%%%%%%%%%%%%%%%%%%%%%%%%%%%%%%%%%%%%%%%%%%%%%%%%%%%%%%%%%%%%%%%%%%%%%%%
\section{Minimal D\"orfler marking with linear complexity}\label{section:main}
%%%%%%%%%%%%%%%%%%%%%%%%%%%%%%%%%%%%%%%%%%%%%%%%%%%%%%%%%%%%%%%%%%%%%%%%%%%%%%%%%%%
This section constitutes the main contribution of this work.
\begin{theorem}\label{thm:mainTheorem}
D\"orfler marking with minimal cardinality is a linear complexity problem.
More precisely, a call of Algorithm~\ref{algorithm:callQuickMark} below with a vector $x \in \R_\star^N$ leads after $\mathcal{O}(N)$ operations to a set $\MM \subseteq \{1, \dots, N\}$ with~\eqref{eq:doerfler} and $\#\MM = N_{\rm min}$.
\end{theorem}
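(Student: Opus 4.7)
The plan is to adapt Hoare's QuickSelect algorithm to the D\"orfler marking problem, replacing the rank-based recursion with one driven by a running partial sum. The starting observation is that a minimal-cardinality marking set can always be chosen as a ``top $n$'' family: if an incumbent set omits an index with a large value $x_j$ in favour of one with a smaller value, swapping the two does not increase the cardinality and does not decrease the sum. Hence a minimal marking set is determined by a single threshold value $p^\star$, and locating $p^\star$ is a \emph{selection} problem rather than a \emph{sorting} problem; this is the point at which linear cost becomes plausible.

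Concretely, I would write a recursive routine that receives an index set $\JJ \subseteq \II$ together with a residual threshold $v$; initially $\JJ = \II$ and $v = \theta \sum_{j \in \II} x_j$. Each invocation chooses a pivot $p$ (by the median-of-medians rule, to retain a deterministic worst-case guarantee) and performs a single pass over $\JJ$ producing the three-way partition
\begin{align*}
 \mathcal G &= \{j \in \JJ \colon x_j > p\}, &
 \mathcal E &= \{j \in \JJ \colon x_j = p\}, &
 \mathcal L &= \{j \in \JJ \colon x_j < p\},
\end{align*}
together with the running sums $S_\mathcal G = \sum_{j \in \mathcal G} x_j$ and $S_\mathcal E = \sum_{j \in \mathcal E} x_j$. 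Three cases are then distinguished: (a) if $v \le S_\mathcal G$, an optimal set is already contained in $\mathcal G$, and the routine recurses on $(\mathcal G, v)$; (b) if $S_\mathcal G < v \le S_\mathcal G + S_\mathcal E$, the routine returns $\mathcal G$ together with any $\lceil (v - S_\mathcal G)/p\rceil$ indices from $\mathcal E$; (c) otherwise the routine commits $\mathcal G \cup \mathcal E$ to $\MM$ and recurses on $(\mathcal L,\, v - S_\mathcal G - S_\mathcal E)$.

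Correctness would be established by induction on the recursion depth. The exchange argument sketched above shows that in case (a) some optimal set is contained in $\mathcal G$, while in cases (b) and (c) some optimal set contains $\mathcal G$ in full (and, in case (c), also $\mathcal E$ in full), so that the recursive subproblem is faithful to the original. Case (b) is the decisive new ingredient: because every index in $\mathcal E$ carries exactly the value $p$, the residual shortfall $v - S_\mathcal G$ can be covered by \emph{any} $\lceil (v - S_\mathcal G)/p\rceil$ indices of $\mathcal E$, which yields the \emph{exact} minimum cardinality rather than the factor-$\nu^{-1}$ approximation of Algorithm~\ref{algorithm:stevenson}.

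The complexity analysis follows the standard QuickSelect template: the median-of-medians pivot guarantees $\max(\#\mathcal G,\#\mathcal L) \le \tfrac{7}{10}\#\JJ$, and only one of the classes $\mathcal G$, $\mathcal L$ is recursed upon, so the recursion obeys $T(N) \le T(\tfrac{7}{10}N) + \OO(N)$ and hence $T(N) = \OO(N)$. The main obstacle, compared to ordinary QuickSelect, is the bookkeeping of $v$ and of the running sums: the proof must verify that these can be accumulated on the fly during the partition pass, and that updating $v \leftarrow v - S_\mathcal G - S_\mathcal E$ in case (c) keeps the recursive subproblem well posed (in particular, that $v$ remains strictly positive and bounded by the sum of the remaining entries). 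Both properties follow immediately from the case distinction, so the bulk of the work reduces to combining the worst-case linear bound for median-of-medians selection with the inductive correctness check.
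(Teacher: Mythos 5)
Your proposal is correct and follows essentially the same route as the paper: a QuickSelect-style divide-and-conquer with a three-way partition driven by a running residual threshold, with median-of-medians pivoting to secure a worst-case linear bound, and with the case where the residual is covered by pivot-valued entries yielding exact (rather than approximate) minimal cardinality. The only cosmetic difference is that you recurse with the raw median-of-medians pivot (giving the $7/10$ contraction), whereas the paper uses it to compute the true median so each step halves the subproblem; both yield $\mathcal{O}(N)$, and the admissibility bookkeeping you flag as ``immediate'' is the content of the paper's Proposition~\ref{prop:wellPosedness} and Lemmas~\ref{lemma:xStarUnique}--\ref{lemma:localDoerfler}.
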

We prove this main theorem explicitly by introducing the \alg{QuickMark} algorithm in Section~\ref{section:quickMark}.
The correctness of the \alg{QuickMark} algorithm is proved in Section~\ref{section:analysis} and the linear complexity of \alg{QuickMark} is shown in Section~\ref{section:complexity}.
Section~\ref{section:implementation} concludes with some remarks on the implementation of the algorithm.

%!TEX root = doerfler.tex
%%%%%%%%%%%%%%%%%%%%%%%%%%%%%%%%%%%%%%%%%%%%%%%%%%%%%%%%%%%%%%%%%%%%%%%%%%%%%%%%%%%
\subsection{The \alg{QuickMark} algorithm}\label{section:quickMark}
%%%%%%%%%%%%%%%%%%%%%%%%%%%%%%%%%%%%%%%%%%%%%%%%%%%%%%%%%%%%%%%%%%%%%%%%%%%%%%%%%%%

Adapting the divide-and-conquer strategy of efficient selection algorithms~\cite{hoare1961_find}, we propose a new strategy to determine, at linear costs, a subset $\MM\subseteq\{1, \dots, N\}$ with \eqref{eq:doerfler} and $\#\MM = N_{\rm min}$.
The proposed algorithm consists of an initial call (Algorithm~\ref{algorithm:callQuickMark}) and the function \alg{QuickMark}~(Algorithm~\ref{algorithm:quickMark}), which steers the divide-and-conquer strategy based on the subroutines \alg{Pivot}~(Algorithm~\ref{algorithm:pivot}) and \alg{Partition}~(Algorithm~\ref{algorithm:partition}).
%First, in Algorithm~\ref{algorithm:callQuickMark} we specify how to choose the parameters for the initial call of Algorithm~\ref{algorithm:quickMark}

To improve readability throughout this chapter, whenever a permutation $\pi$ on $\{1, \dots N\}$ would be altered by a function, that function instead is written to take the permutation as input $\pi_\old$ and returns as output the new permutation $\pi_\new$.
If a permutation is not changed by a function, it is simply denoted by $\pi$.
Moreover, let $\pi_\id$ represent the identity permutation on $\{1, \dots, N\}$, i.e.,  $\pi_\id(j) = j$ for all $j \in \{1, \dots, N\}$.
For an index set $\JJ\subseteq\{1, \dots, N\}$ define $\pi(\JJ) := \{\pi(j)\colon j\in\JJ\}$.

\begin{algorithm}[Initial call of \alg{QuickMark}]{\label{algorithm:callQuickMark}}
For the setting from Section~\ref{section:setting}, we perform the following steps {\rm (i)--(iv)}:

\begin{itemize}
\item[\rm(i)] Initialize the identity permutation $\pi_\old := \pi_\id$.
\item[\rm(ii)] Define lower index $\ell := 1$ and upper index $u := N$.
\item[\rm(iii)] Compute the goal value $v := \theta\sum_{j=1}^N x_j$.
\item[\rm(iv)] Call $[\pi_\new, n] :=$~\alg{QuickMark}$(x, \pi_\old, \ell, u, v)$
\end{itemize}

\textbf{Output:} $\MM := \pi_\new(\{1, \dots, n\})$
\end{algorithm}

Analogously to selection algorithms~\cite{hoare1961_find}, the \alg{QuickMark} algorithm is based on the subroutine \alg{Partition}, where elements are essentially separated into two classes:
Those elements with smaller value than the pivot element, and those with greater value than the pivot element.
Then, the algorithm decides, which of the two classes is not to be inspected further.

\begin{algorithm}[{{$[\pi_\new, n] =$ \alg{QuickMark}$(x, \pi_\old, \ell, u, v)$}}]\label{algorithm:quickMark}
\textbf{Input:} Vector $x\in\R^N$, permutation $\pi_\old$ on $\{1, \dots, N\}$, goal value $v\in\R_{>0}$, lower and upper indices $1\le \ell\le u\le N$.
\begin{itemize}
\item[\rm(i)] Determine a pivot index $[p] :=$ \alg{Pivot}$(x, \pi_\old, \ell, u)$.
\item[\rm(ii)] Determine a new permutation via $[\pi_\new, g, s] :=$ \alg{Partition}$(x, \pi_\old, \ell, u, p)$.
\item[\rm(iii)] Compute the sum of the greatest elements $\sigma_g := \sum_{j=\ell}^g x_{\pi_\new(j)}$.
\item[\rm(iv)] If $\sigma_g \ge v$, then return \alg{QuickMark}$(x, \pi_\new, \ell, g, v)$
\item[\rm(v)] Else, if $\sigma_g + (s-g-1)x_{\pi_\old(p)} \ge v$, then return $[\pi_\new, g + \lceil(v-\sigma_g)/x_{\pi_\old(p)}\rceil]$
\item[\rm(vi)] Else return \alg{QuickMark}$(x, \pi_\new, s, u, v-\sigma_g - (s-g-1)x_{\pi_\old(p)})$
\end{itemize}
\textbf{Output:} Permutation $\pi_\new$ of $\{1, \dots, N\}$ and index $n \in \{1, \dots, N\}$.
\end{algorithm}

The \alg{Pivot} subroutine should determine a feasible pivot element of a given (sub-) array.
While the concrete choice of the pivot strategy is irrelevant for the correctness of the procedure, it is the decisive factor for the computational complexity of the divide-and-conquer strategy.
For now, we consider an arbitrarily (e.g., randomly) chosen $p \in \{\ell, \dots, u\}$.
While in Section~\ref{section:analysis} correctness of the algorithm is proved independently of the concrete pivot strategy, in Section~\ref{section:complexity} we propose a pivot strategy that leads --- even in the worst case --- to linear complexity $\mathcal{O}(N)$ of Algorithm~\ref{algorithm:callQuickMark}.

\begin{algorithm}[{{$[p] =$ \alg{Pivot}$(x, \pi, \ell, u)$}}]{\label{algorithm:pivot}}
\textbf{Input:} Vector $x\in\R^N$, permutation $\pi$ on $\{1, \dots, N\}$, lower and upper indices $1\le \ell\le u\le N$.
\begin{itemize}
\item[\rm(i)] Use $x_{\pi(\ell)}, x_{\pi(\ell+1)}, \dots, x_{\pi(u)}$ to determine a pivot index $p\in \{\ell, \dots, u\}$.
\end{itemize}
\textbf{Output:} Pivot index $p\in\{\ell, \dots, u\}$.
\end{algorithm}

For a given pivot element, the \alg{Partition} subroutine reorganizes the elements of an (sub-) array depending on whether they are greater than, smaller than, or equal to the pivot.

\begin{algorithm}[{{$[\pi_\new, g, s]$ = \alg{Partition}$(x, \pi_\old, \ell, u, p)$}}]{\label{algorithm:partition}}
\textbf{Input:} Vector $x\in\R^N$, permutation $\pi_\old$ on $\{1, \dots, N\}$, lower and upper indices $1\le \ell\le u\le N$, pivot index $\ell \le p \le u$.
\begin{itemize}
\item[\rm(i)] Compute a permutation $\pi_\mod$ on $\{\ell, \dots, u\}$ together with the unique indices $g \in \{\ell-1, \dots, u-1\}$ and $s \in \{\ell+1, \dots, u+1\}$ such that the following three implications hold true for all $j \in \{\ell, \dots, u\}$:
  \begin{itemize}
    \item[$\bullet$] If $x_{\pi_\old(\pi_\mod(j))} > x_{\pi_\old(p)}$, then $\ell \le j \le g$.
    \item[$\bullet$] If $x_{\pi_\old(\pi_\mod(j))} = x_{\pi_\old(p)}$, then $g < j < s$.
    \item[$\bullet$] If $x_{\pi_\old(\pi_\mod(j))} < x_{\pi_\old(p)}$, then $s \le j \le u$.
  \end{itemize}
  \item[\rm(ii)] Define $\pi_\new(j) := \begin{cases} \pi_\old(\pi_\mod(j)) \quad& \text{for } j \in \{\ell, \dots, u\},\\ \pi_\old(j) & \text{else}.\end{cases}$
\end{itemize}
\textbf{Output:} Permutation $\pi_\new$ of $\{1, \dots, N\}$ together with indices $g \in \{\ell-1, \dots, u - 1\}$ and $s \in \{\ell + 1, \dots, u+1\}$.
\end{algorithm}%

The following remark collects some important observations~\eqref{eq:prePartition:lu}--\eqref{eq:partiallyOrdered:gs} about the state of $\pi_\old$ and $\pi_\new$ in Algorithm~\ref{algorithm:partition}.
The validity of \eqref{eq:prePartition:lu} will be shown in Proposition~\ref{prop:wellPosedness} in Section~\ref{section:analysis}.
The properties \eqref{eq:partiallyOrdered:gs} follow directly from Algorithm~\ref{algorithm:partition}.
\begin{remark}
When \alg{Partition} (Algorithm~\ref{algorithm:partition}) is called in step~{\rm(ii)} of \alg{QuickMark} (Algorithm~\ref{algorithm:quickMark}), the permutation $\pi_\old$ and the indices $\ell, u$ satisfy
\begin{subequations}\label{eq:prePartition:lu}
\begin{alignat}{3}
\label{eq:prePartition:l} x_{\pi_\old(j)} &> x_{\pi_\old(k)} &&\quad\text{for all}\quad &&1\le j<\ell\le k\le N\,,\\
\label{eq:prePartition:u} x_{\pi_\old(j)} &> x_{\pi_\old(k)} &&\quad\text{for all}\quad &&1\le j \le u < k\le N\,.
\end{alignat}
\end{subequations}
This is illustrated in Figure~\ref{figure:prePartition}.

The permutation $\pi_\new$ defined in step~{\rm(ii)} of Algorithm~\ref{algorithm:partition} differs from $\pi_\old$ only at the indices $j\in\{\ell, \dots, u\}\subseteq\{1, \dots, N\}$.
Consequently, \eqref{eq:prePartition:l}--\eqref{eq:prePartition:u} are preserved by $\pi_\new$.
With the indices $g, s$ returned by Algorithm~\ref{algorithm:partition} and $p$ the pivot index, it additionally holds that
\begin{subequations}\label{eq:partiallyOrdered:gs}
\begin{alignat}{3}
\label{eq:partiallyOrdered:g} x_{\pi_\new(j)} &> x_{\pi_\old(p)} &\quad\text{for all}\quad &\ell\le j\le g\,,\\
\label{eq:partiallyOrdered:p} x_{\pi_\new(j)} &= x_{\pi_\old(p)} &\quad\text{for all}\quad &g < j < s\,,\\
\label{eq:partiallyOrdered:s} x_{\pi_\new(j)} &< x_{\pi_\old(p)} &\quad\text{for all}\quad &s \le j \le u\,.
\end{alignat}
\end{subequations}
This is illustrated in Figure~\ref{figure:postPartition}.
\end{remark}
%%%%%%%%%%%%%%%%%%%%%%%%%%%%%%%%%%%%%%%%%%%%%%%%%%%%%%%%%%%%%%%%%%%%%%%%%%%%%%%%%%%
\begin{figure}
\begin{tikzpicture}[scale=1.5]

\coordinate[label=-90:$1$] (left) at (0,0);
\coordinate[label=-90:$N$] (right) at (8,0);
\coordinate[] (up) at (0,0.43);
\coordinate[] (microUp) at (0,0.045);

\coordinate[label=135:$x\circ\pi_\old$] (name) at (left);

\coordinate[label=-90:$\ell$] (l) at ($(left)!0.2!(right)$);
\coordinate[label=-90:$u$] (u) at ($(left)!0.8!(right)$);
\coordinate[label=90:$\gg$] (tmp) at ($0.5*(microUp)+(left)!0.1!(right)$);
\coordinate[label=90:$\ll$] (tmp) at ($0.5*(microUp)+(left)!0.9!(right)$);

\draw[fill] (left) circle (.75pt);
\draw[fill] (right) circle (.75pt);
\draw[dashed, very thick, color=black!60!white] ($(l)+(microUp)$) -- ($(l)+(up)$);
\draw[fill] (l) circle (.75pt);
\draw[dashed, very thick, color=black!60!white] ($(u)+(microUp)$) -- ($(u)+(up)$);
\draw[fill] (u) circle (.75pt);

\draw[thick] (left)--(right)--($(right)+(up)$)--($(left)+(up)$)--cycle;

\end{tikzpicture}%
\caption{Ordering of $x\circ\pi_\old$ when \alg{Partition} is called, cf.~\eqref{eq:prePartition:lu}.
The array $x\circ\pi_\old$ is partially sorted in descending order:
The $\ell-1$ strictly largest values in $x\circ\pi_\old$ are obtained by the indices $\{1,\dots,\ell-1\}$. The $N-u$ strictly smallest values in $x\circ\pi_\old$ are obtained by the indices $\{u+1,\dots,N\}$.}
\label{figure:prePartition}
\end{figure}
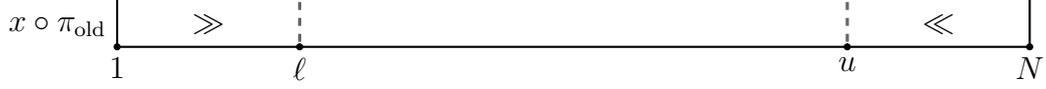
%%%%%%%%%%%%%%%%%%%%%%%%%%%%%%%%%%%%%%%%%%%%%%%%%%%%%%%%%%%%%%%%%%%%%%%%%%%%%%%%%%%
%%%%%%%%%%%%%%%%%%%%%%%%%%%%%%%%%%%%%%%%%%%%%%%%%%%%%%%%%%%%%%%%%%%%%%%%%%%%%%%%%%%
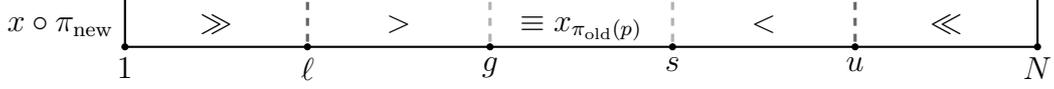
\begin{figure}
\begin{tikzpicture}[scale=1.5]

\coordinate[label=-90:$1$] (left) at (0,0);
\coordinate[label=-90:$N$] (right) at (8,0);
\coordinate[] (up) at (0,0.43);
\coordinate[] (microUp) at (0,0.045);
\coordinate[] (microDown) at (0,-0.045);

\coordinate[label=135:$x\circ\pi_\new$] (name) at (left);

\coordinate[label=-90:$\ell$] (l) at ($(left)!0.2!(right)$);
\coordinate[label=-90:$u$] (u) at ($(left)!0.8!(right)$);
\coordinate[label=90:$\gg$] (tmp) at ($0.5*(microUp)+(left)!0.1!(right)$);
\coordinate[label=90:$\ll$] (tmp) at ($0.5*(microUp)+(left)!0.9!(right)$);

\coordinate[label=-90:$g$] (g) at ($(left)!0.4!(right)$);
\coordinate[label=-90:$s$] (s) at ($(left)!0.6!(right)$);
\coordinate[label=90:$>$] (tmp) at ($0.5*(microUp)+(left)!0.3!(right)$);
\coordinate[label=90:$<$] (tmp) at ($0.5*(microUp)+(left)!0.7!(right)$);
\coordinate[label=90:$\equiv x_{\pi_\old(p)}$] (tmp) at ($(microDown)+(left)!0.5!(right)$);

\draw[fill] (left) circle (.75pt);
\draw[fill] (right) circle (.75pt);
\draw[dashed, very thick, color=black!60!white] ($(l)+(microUp)$) -- ($(l)+(up)$);
\draw[fill] (l) circle (.75pt);
\draw[dashed, very thick, color=black!60!white] ($(u)+(microUp)$) -- ($(u)+(up)$);
\draw[fill] (u) circle (.75pt);
\draw[dashed, very thick, color=black!30!white] ($(g)+(microUp)$) -- ($(g)+(up)$);
\draw[fill] (g) circle (.75pt);
\draw[dashed, very thick, color=black!30!white] ($(s)+(microUp)$) -- ($(s)+(up)$);
\draw[fill] (s) circle (.75pt);

\draw[thick] (left)--(right)--($(right)+(up)$)--($(left)+(up)$)--cycle;

\end{tikzpicture}%
\caption{Ordering of $x\circ\pi_\new$ when \alg{Partition} terminates, cf.~\eqref{eq:partiallyOrdered:gs}.
The properties~\eqref{eq:prePartition:lu} of $\pi_\old$ illustrated in Figure~\ref{figure:prePartition} are preserved.
Additionally, the reordered array $x\circ\pi_\new$ is partially sorted for the indices $\{\ell,\dots,u\}$: Within the index range $\{\ell,\dots,u\}$, the $g-\ell+1$ strictly largest values in $x\circ\pi_\new$ are obtained by the indices $\{\ell,\dots,g\}$, while the $u-s+1$ strictly smallest values in $x\circ\pi_\new$ are obtained by the indices $\{s,\dots,u\}$.
In particular, all indices $p'$ with $g<p'<s$ satisfy $x_{\pi_\new(p')} = x_{\pi_\old(p)}$ with the pivot-index $p$.}
\label{figure:postPartition}
\end{figure}
%%%%%%%%%%%%%%%%%%%%%%%%%%%%%%%%%%%%%%%%%%%%%%%%%%%%%%%%%%%%%%%%%%%%%%%%%%%%%%%%%%%

%!TEX root = doerfler.tex
%%%%%%%%%%%%%%%%%%%%%%%%%%%%%%%%%%%%%%%%%%%%%%%%%%%%%%%%%%%%%%%%%%%%%%%%%%%%%%%%%%%
%$
\subsection{Correctness of the \alg{QuickMark} algorithm}\label{section:analysis}
%%%%%%%%%%%%%%%%%%%%%%%%%%%%%%%%%%%%%%%%%%%%%%%%%%%%%%%%%%%%%%%%%%%%%%%%%%%%%%%%%%%
%%%%%%%%%%%%%%%%%%%%%%%%%%%%%%%%%%%%%%%%%%%%%%%%%%%%%%%%%%%%%%%%%%%%%%%%%%%%%%%%%%%
We consider $x\in\R_\star^N$, permutations $\pi$ on $\{1, \dots, N\}$, indices $\ell, u \in \{1, \dots, N\}$ with $1\le \ell \le u \le N$, and a value $v \in \R_{>0}$.
Proving the correctness of \alg{QuickMark} (Algorithm~\ref{algorithm:quickMark}) is organized into three steps:
In Section~\ref{section:admissibility} we verify some essential properties satisfied by the input parameters of calls to Algorithm~\ref{algorithm:quickMark}.
Section~\ref{section:MMM} introduces auxiliary subproblems generated and solved by Algorithm~\ref{algorithm:quickMark} and gives insight on the idea behind the \alg{QuickMark} strategy.
Termination of Algorithm~\ref{algorithm:quickMark} is investigated in Section~\ref{section:termination}, where the correctness is proved.

%%%%%%%%%%%%%%%%%%%%
\subsubsection{{\bf Admissible calls to \alg{QuickMark}}}\label{section:admissibility}
%%%%%%%%%%%%%%%%%%%%
We consider the following crucial properties, which will be shown to be always satisfied in Proposition~\ref{prop:wellPosedness}.
\begin{definition}\label{def:admissible}
A call \alg{QuickMark}$(x, \pi_\old, \ell, u, v)$ to Algorithm~\ref{algorithm:quickMark} is called \textbf{admissible}, if the inputs $x\in\R_\star^N, \pi_\old, \ell, u, v$ satisfy the following conditions {\rm(a)}--{\rm(b)}:
\begin{itemize}
  \item[\rm(a)] It holds that
\begin{subequations}\label{eq:partiallyOrdered:lu}
\begin{alignat}{3}
  \label{eq:partiallyOrdered:l} x_{\pi_\old(j)} &> x_{\pi_\old(k)} &\quad\text{for all}\quad &1\le j < \ell \le k \le N\,,\\
  \label{eq:partiallyOrdered:u} x_{\pi_\old(j)} &> x_{\pi_\old(k)} &\quad\text{for all}\quad &1\le j \le u < k \le N\,.
\end{alignat}
\end{subequations}
  \item[\rm(b)] It holds that
\begin{align}\label{eq:admissibleV}
0 < v = \theta\sum_{j=1}^N x_j - \sum_{j=1}^{\ell-1} x_{\pi_\old(j)} \le \sum_{j=\ell}^u x_{\pi_\old(j)}\,.
\end{align}
\end{itemize}
\end{definition}

In fact, the following proposition shows that recursive calls of \alg{QuickMark} preserve the admissibility conditions.
\begin{proposition}\label{prop:wellPosedness}
If \alg{QuickMark} is initially called by Algorithm~\ref{algorithm:callQuickMark}{\rm(iv)}, then each subsequent recursive call \alg{QuickMark}$(x, \pi, \ell, u, v)$ from step {\rm(iv)} or {\rm(vi)} of Algorithm~\ref{algorithm:quickMark} is admissible.
\end{proposition}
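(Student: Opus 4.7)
The plan is to argue by induction on the depth of the recursion. The base case is to verify admissibility of the initial call from Algorithm~\ref{algorithm:callQuickMark}(iv), which uses $\pi_\old=\pi_\id$, $\ell=1$, $u=N$, and $v=\theta\sum_{j=1}^N x_j$. Condition~(a) is then vacuous, since both index ranges $\{1,\dots,\ell-1\}$ and $\{u+1,\dots,N\}$ are empty, and condition~(b) reduces to $0<\theta\sum_j x_j\le\sum_{j=1}^N x_j$, which follows from $x\in\R_\star^N$ and $0<\theta<1$. For the induction step, I assume that \alg{QuickMark}$(x,\pi_\old,\ell,u,v)$ is admissible and show that the recursive calls triggered in step~(iv) or step~(vi) inherit the two defining properties of Definition~\ref{def:admissible}.

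The key observation for condition~(a) is that the permutation $\pi_\new$ returned by \alg{Partition} agrees with $\pi_\old$ outside $\{\ell,\dots,u\}$ and merely reshuffles $\pi_\old$ on $\{\ell,\dots,u\}$. Hence the multiset $\{x_{\pi_\new(j)}:j\in\{\ell,\dots,u\}\}$ equals $\{x_{\pi_\old(j)}:j\in\{\ell,\dots,u\}\}$, so the strict-inequality properties~\eqref{eq:partiallyOrdered:lu} assumed for $\pi_\old$ at the cuts $\ell$ and $u$ are preserved by $\pi_\new$. To upgrade to cuts at the new bounds: in case~(iv) the new upper bound is $g$, and I combine the preserved cut at $\ell$ (for indices $j<\ell$) with~\eqref{eq:partiallyOrdered:g}--\eqref{eq:partiallyOrdered:s} (for $\ell\le j\le g$), together with the fact that $x_{\pi_\old(p)}>x_{\pi_\old(k)}$ for every $k>u$ (since $p\in\{\ell,\dots,u\}$), to conclude that $x_{\pi_\new(j)}>x_{\pi_\new(k)}$ for all $j\le g<k$. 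Case~(vi) with new lower bound $s$ is analogous, now using~\eqref{eq:partiallyOrdered:p}--\eqref{eq:partiallyOrdered:s} and the middle block of pivot-valued entries.

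For condition~(b) I exploit that $\pi_\new$ and $\pi_\old$ agree on $\{1,\dots,\ell-1\}$, so the partial sum $\sum_{j=1}^{\ell-1}x_{\pi_\old(j)}$ is unchanged; in case~(iv) the value $v$ and the lower bound $\ell$ are reused, so the equality in~\eqref{eq:admissibleV} carries over verbatim, positivity of $v$ is inherited, and the upper bound $v\le\sum_{j=\ell}^g x_{\pi_\new(j)}=\sigma_g$ is precisely the branching condition of step~(iv). In case~(vi) I use that $\sum_{j=g+1}^{s-1}x_{\pi_\new(j)}=(s-g-1)x_{\pi_\old(p)}$ by~\eqref{eq:partiallyOrdered:p}, so subtracting $\sigma_g$ and this middle-block sum from both sides of~\eqref{eq:admissibleV} yields
\begin{equation*}
  v-\sigma_g-(s-g-1)x_{\pi_\old(p)}=\theta\sum_{j=1}^N x_j-\sum_{j=1}^{s-1}x_{\pi_\new(j)}\le\sum_{j=s}^u x_{\pi_\new(j)},
\end{equation*}
the final inequality using that the multiset sum over $\{\ell,\dots,u\}$ is permutation-invariant. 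Positivity of the updated $v'$ is the negation of the branching conditions in steps~(iv)--(v), i.e., exactly the condition that triggers step~(vi).

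The main subtlety, and where I expect to spend most of the bookkeeping, is the consistent handling of the three disjoint regimes $j<\ell$, $\ell\le j\le u$, and $j>u$ when verifying~\eqref{eq:partiallyOrdered:lu} after partitioning: all equalities between $\pi_\old$ and $\pi_\new$, as well as the strict inequalities involving $x_{\pi_\old(p)}$, must be tracked separately across these regimes. Once this is done carefully, the telescoping of partial sums required for~\eqref{eq:admissibleV} is straightforward, and the induction closes.
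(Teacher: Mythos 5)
Your proof is correct and takes essentially the same route as the paper: induction on the recursion depth, with the initial call handled as a base case, and the step verified by showing that conditions (a) and (b) of Definition~\ref{def:admissible} are preserved under the permutation update from \alg{Partition} together with the branching conditions of steps (iv)--(vi). Your handling of condition (a) is a bit more explicit than the paper's one-line reference to \eqref{eq:partiallyOrdered:g}--\eqref{eq:partiallyOrdered:s}, but the underlying argument is the same, and your telescoping for (b) in case (vi), including the use of multiset invariance of $\sum_{j=\ell}^u x_{\pi(j)}$ under $\pi_\old\to\pi_\new$, matches the paper's chain of (in)equalities.
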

\begin{proof}
The statement follows directly by induction.
First, we show that the initial call \alg{QuickMark}$(x, \pi_\old, \ell, u, v)$ of Algorithm~\ref{algorithm:quickMark} initiated by Algorithm~\ref{algorithm:callQuickMark}{\rm(iv)} with the inputs $x\in\R_\star^N$, $\pi_\old := \pi_\id$, $\ell := 1$, $u := N$, and $v := \theta\sum_{j=1}^N x_j$ is admissible:
Since $\ell=1$ and $u=N$, Definition~\ref{def:admissible}{\rm(a)} contains only statements about indices in the empty set and is therefore satisfied.
Definition~\ref{def:admissible}{\rm(b)} follows from $x\in\R_\star^N$, $0 < \theta < 1$, and the definition of $v$.
%Conditions {\rm(c)} and {\rm(d)} are trivially satisfied since $\pi_\old = \pi_\id$.

For the induction step, consider an admissible call \alg{QuickMark}$(x, \pi_\old, \ell, u, v)$ of Algorithm~\ref{algorithm:quickMark}.
We show that a potential subsequent call \alg{QuickMark}$(x, \pi_\new, \ell', u', v')$ initiated by either Algorithm~\ref{algorithm:quickMark}{\rm(iv)} (i.e., $\ell'=\ell$, $u'=g$, $v'=v$), or by Algorithm~\ref{algorithm:quickMark}{\rm(vi)} (i.e., $\ell'=s$, $u'=u$, $v'=v-\sum_{j=\ell}^{s-1} x_{\pi_\new(j)}$), is also admissible:
By {\rm(a)}, {\rm(b)} we refer to the assumption, i.e., the admissibility conditions of Definition~\ref{def:admissible} satisfied by \alg{QuickMark}$(x, \pi_\old, \ell, u, v)$.
We aim to show the corresponding admissibility conditions of Definition~\ref{def:admissible} for the call \alg{QuickMark}$(x, \pi_\new, \ell', u', v')$, which will be denoted by $\rm(a')$, $\rm(b')$.

Recall, that in either case (step~{\rm(iv)} or step~{\rm(vi)} in Algorithm~\ref{algorithm:quickMark}), $\pi_\new$ differs from $\pi_\old$ only on the index set $\{\ell, \dots, u\}\subseteq\{1, \dots, N\}$.
Therefore, in both cases $\rm(a')$ follows from \eqref{eq:partiallyOrdered:g}--\eqref{eq:partiallyOrdered:s} and {\rm(a)}.
If recursion relies on Algorithm~\ref{algorithm:quickMark}{\rm(iv)}, then $\ell'=\ell$, $u'=g$, and $v':=v\le\sigma_g$.
Hence,
\begin{align*}
  0 < v' =v \stackrel{{\rm(b)}}{=} \theta\sum_{j=1}^N x_j - \sum_{j=1}^{\ell-1} x_{\pi_\old(j)} = \theta\sum_{j=1}^N x_j - \sum_{j=1}^{\ell'-1} x_{\pi_\new(j)} \le \sigma_g = \sum_{j=\ell'}^{u'} x_{\pi_\new(j)}
\end{align*}
proves $\rm(b')$.
%To see $\rm(c')$, let $\MM\in\MMM(x, \pi_\new, \ell, u, v)$ be arbitrary.
%The assumption $v\le \sigma_g$ and the partial ordering \eqref{eq:partiallyOrdered:g}--\eqref{eq:partiallyOrdered:s} yield $\MM\subseteq\{\ell, \dots, g\}$.
%Hence, there exists $k\in\MM\subseteq\{\ell'=\ell, \dots, u'=g\}$ with $x_{\pi_\new(k)} = x^\ast(x, \pi_\new, \ell, u, v)$.
%Now, {\rm(a)} and \eqref{eq:partiallyOrdered:g}--\eqref{eq:partiallyOrdered:s} imply that for all $j\in\{\ell, \dots, u\}$ with $x_{\pi_\new(j)} = x^\ast(x, \pi_\new, \ell, u, v)$ it holds that $j\in\{\ell', \dots, u'\}$. 
%Due to $\pi_\new(\{\ell, \dots, u\}) = \pi_\old(\{\ell, \dots, u\})$, it holds that $x^\ast(x, \pi_\new, \ell, u, v) = x^\ast(x, \pi_\old, \ell, u, v)$.
%Now $\rm(c')$ follows from {\rm(c)}--{\rm(d)}.
%The claim $\rm(d')$ follows immediately from {\rm(d)} and \eqref{equation:recursion:g}.
If recursion relies on Algorithm~\ref{algorithm:quickMark}{\rm(vi)}, then $\ell'=s$, $u'=u$, and
\begin{align}\label{eq:auxProofVI}
  v > \sigma_g + (s-g-1)x_{\pi_\old(p)} \stackrel{\eqref{eq:partiallyOrdered:p}}{=} \sum_{j=\ell}^{s-1} x_{\pi_\new(j)} \,.
\end{align}
Combining {\rm(b)} and the last estimate yields for $v' := v - \sum_{j=\ell}^{s-1} x_{\pi_\new(j)}$ that
\begin{align*}
  0 \stackrel{\eqref{eq:auxProofVI}}{<}  v' \stackrel{{\rm(b)}}{=} \theta\sum_{j=1}^N x_j - \sum_{j=1}^{\ell-1} x_{\pi_\old(j)} - \sum_{j=\ell}^{s-1} x_{\pi_\new(j)} = \theta\sum_{j=1}^N x_j - \sum_{j=1}^{\ell'-1} x_{\pi_\new(j)} \stackrel{{\rm(b)}}{\le} \sum_{j=\ell'}^{u'} x_{\pi_\new(j)}\,.
\end{align*}
This shows $\rm(b')$.
%To see $\rm(c')$, let $\MM\in\MMM(x, \pi_\new, \ell, u, v)$ be arbitrary.
%Now, \eqref{eq:auxProofVI} and the partial ordering~\eqref{eq:partiallyOrdered:g}--\eqref{eq:partiallyOrdered:s} yield $\MM\supsetneqq\{\ell, \dots, s-1\}$.
%Hence, there exists $k\in\MM\cap\{s, \dots, u\}$ with $x_{\pi_\new(k)} = x^\ast(x, \pi_\new, \ell, u, v)$.
%Now, {\rm(a)} and the partial ordering \eqref{eq:partiallyOrdered:g}--\eqref{eq:partiallyOrdered:s} imply that for all $j\in\{\ell, \dots, u\}$ with $x_{\pi_\new(j)} = x^\ast(x, \pi_\new, \ell, u, v)$ it holds that $j\in\{\ell', \dots, u'\}$. 
%Again, we use $x^\ast(x, \pi_\new, \ell, u, v) = x^\ast(x, \pi_\old, \ell, u, v)$ to see that $\rm(c')$ follows from {\rm(c)}--{\rm(d)}.
%The claim $\rm(d')$ follows immediately from {\rm(d)} and \eqref{equation:recursion:s}.
\end{proof}

%%%%%%%%%%%%%%%%%%%%
\subsubsection{{\bf Subproblems generated by \alg{QuickMark}}}\label{section:MMM}
%%%%%%%%%%%%%%%%%%%%
To analyze Algorithm~\ref{algorithm:quickMark}, we introduce some auxiliary notation.
In particular, the symbol $\MM$ will be used differently than in Section~\ref{section:setting}.
The connection between the two notations is clarified in Remark~\ref{remark:MMnotation}.

By $\PP(\{\ell, \dots, u\})$, we denote the power set of $\{\ell, \dots, u\}$.
For any admissible call \alg{QuickMark}$(x, \pi, \ell, u, v)$ to Algorithm~\ref{algorithm:quickMark}, let $\MMM(x, \pi, \ell, u, v) \subseteq \PP(\{\ell, \dots, u\})$ consist of all $\MM \in \PP(\{\ell, \dots, u\})$ such that
\begin{subequations}\label{equation:MMM}
\begin{alignat}{2}
  \label{equation:MMM:a} x_{\pi(j)} &\ge x_{\pi(k)} &\quad &\text{for all } j\in\MM  \text{ and all } k\in\{\ell, \dots, u\}\setminus\MM,\\
  \label{equation:MMM:b} \sum_{j\in\MM} x_{\pi(j)} &\ge v > \sum_{j\in\MM\setminus\{k\}} x_{\pi(j)} &\quad &\text{for all } k\in\MM\,.
\end{alignat}
\end{subequations}
The following remark follows immediately from \eqref{equation:MMM:a}--\eqref{equation:MMM:b} and connects the introduced notation to the D\"orfler marking criterion~\eqref{eq:doerfler} from Section~\ref{section:setting}.
\begin{remark}\label{remark:MMnotation}
For arbitrary $\MM \in \MMM(x, \pi, 1, N, \theta\sum_{j=1}^N x_j)$, the set $\MM' := \pi(\MM) \in \MMM(x, \pi_\id, 1, N, \theta\sum_{j=1}^N x_j)$ satisfies \eqref{eq:doerfler} with minimal cardinality $\#\MM' = N_{\rm min}$.
\end{remark}

Later in Section~\ref{section:termination}, we will prove that \alg{QuickMark} called by Algorithm~\ref{algorithm:callQuickMark} determines a set $\MM\in\MMM(x, \pi_\id, 1, N, \theta\sum_{j=1}^N x_j)$.
The core idea behind the proof is the observation that for an admissible call \alg{QuickMark}$(x, \pi, \ell, u, v)$, the set $\MMM(x, \pi_\id, 1, N, \theta\sum_{j=1}^N x_j)$ can be written as
\begin{align*}
\Big\{\pi(\{1, \dots, \ell-1\}) \cup \pi(\MM') \colon \MM' \in \MMM(x, \pi, \ell, u, v)\Big\} \,.
\end{align*}
Hence, an admissible call \alg{QuickMark}$(x, \pi_\old, \ell, u, v)$ to Algorithm~\ref{algorithm:quickMark} either determines a set $\MM\in\MMM(x, \pi_\new, \ell, u, v)$ and terminates in step~{\rm(v)}, or it initiates another admissible recursive call denoted by \alg{QuickMark}$(x, \pi_\new, \ell', u', v')$ in step~{\rm(iv)} or step~{\rm(vi)}, where $\{\ell', \dots, u'\} \subsetneqq \{\ell, \dots, u\}$, i.e., the problem is reduced to a strict subproblem.

First, we will show, that all occurring subproblems of finding $\MM\in\MMM(x, \pi, \ell, u, v)$ are well-posed.
In fact, for an admissible call \alg{QuickMark}$(x, \pi, \ell, u, v)$ the set $\MMM(x, \pi, \ell, u, v)$ is always nonempty and all $\MM\in\MMM(x, \pi, \ell, u, v)$ attain the same minimum in $x\circ\pi$.
\begin{lemma}\label{lemma:xStarUnique}
Let \alg{QuickMark}$(x, \pi, \ell, u, v)$ be an admissible call to Algorithm~\ref{algorithm:quickMark}.
Then, $\MMM(x, \pi, \ell, u, v) \not= \emptyset$. 
Moreover, the definition
\begin{align}\label{definition:xstar}
x^\ast(x, \pi, \ell, u, v) := \min_{j\in\MM} x_{\pi(j)}
\end{align}
is independent of the concrete choice of $\MM\in\MMM(x, \pi, \ell, u, v)$.
\end{lemma}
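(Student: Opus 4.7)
The plan is to prove existence of $\MM \in \MMM(x, \pi, \ell, u, v)$ by a greedy construction, and then derive uniqueness of the minimum value from a direct size-comparison argument based on conditions~\eqref{equation:MMM:a}--\eqref{equation:MMM:b}.

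For existence, I would consider the values $x_{\pi(\ell)}, \ldots, x_{\pi(u)}$ and pick indices one by one in order of decreasing value, accumulating the partial sum. Since admissibility~\eqref{eq:admissibleV} gives $\sum_{j=\ell}^u x_{\pi(j)} \ge v > 0$, the partial sum eventually reaches or exceeds $v$; let $\MM$ consist of the indices picked at that point. Condition~\eqref{equation:MMM:a} then holds because $\MM$ was built from the largest values. For~\eqref{equation:MMM:b}, the last index added is by construction an index $k^\ast \in \MM$ with $x_{\pi(k^\ast)} = \min_{j \in \MM} x_{\pi(j)}$, and the partial sum before adding it was strictly less than $v$; since removing any other $k \in \MM$ decreases the sum by at least $x_{\pi(k^\ast)}$, the strict inequality $v > \sum_{j \in \MM \setminus \{k\}} x_{\pi(j)}$ holds for every $k \in \MM$.

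For uniqueness of $x^\ast$, let $\MM_1, \MM_2 \in \MMM(x, \pi, \ell, u, v)$ and denote $x_i^\ast := \min_{j \in \MM_i} x_{\pi(j)}$. I would assume by contradiction that $x_1^\ast < x_2^\ast$ and introduce the auxiliary set
\begin{equation*}
  S_> := \set{j \in \{\ell, \ldots, u\}}{x_{\pi(j)} > x_1^\ast}.
\end{equation*}
Applying~\eqref{equation:MMM:a} to $\MM_1$ with a minimizer $k_1 \in \MM_1$ shows $x_{\pi(k)} \le x_1^\ast$ for all $k \in \{\ell,\ldots,u\} \setminus \MM_1$, hence $S_> \subseteq \MM_1 \setminus \{k_1\}$. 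Applying~\eqref{equation:MMM:a} to $\MM_2$ and using $x_2^\ast > x_1^\ast$ yields $\MM_2 \subseteq S_>$. Combining these two inclusions gives
\begin{equation*}
  \sum_{j \in \MM_2} x_{\pi(j)} \le \sum_{j \in \MM_1 \setminus \{k_1\}} x_{\pi(j)} < v,
\end{equation*}
where the last inequality is~\eqref{equation:MMM:b} for $\MM_1$ with $k = k_1$. This contradicts~\eqref{equation:MMM:b} for $\MM_2$, so $x_1^\ast = x_2^\ast$.

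The main obstacle I anticipate is handling ties correctly in the existence argument, namely making sure the greedy procedure picks a set whose complement in $\{\ell,\ldots,u\}$ really has strictly smaller values (as~\eqref{equation:MMM:a} only compares $\MM$ and its complement, while the boundary may have repeated values equal to $x^\ast$). The greedy formulation sidesteps this by stopping exactly when the sum first reaches $v$, so that equality of values between $\MM$ and its complement is allowed by~\eqref{equation:MMM:a}, while the strict drop below $v$ in~\eqref{equation:MMM:b} is enforced precisely by stopping at the first such index. The uniqueness argument above then shows that although $\MM$ is not unique, the threshold value $x^\ast$ is.
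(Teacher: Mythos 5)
Your proof is correct and follows essentially the same approach as the paper: the existence part is a mirror image (the paper peels off smallest elements from $\{\ell,\dots,u\}$ until the sum drops below $v$, you add largest elements until the sum first reaches $v$), and the uniqueness part is the same contradiction argument, packaged slightly more tightly via the auxiliary set $S_>$, which yields $\MM_2\subseteq\MM_1\setminus\{k_1\}$ directly where the paper first shows $\MM_2\subsetneqq\MM_1$ and then argues the chain $v > -x_1^\ast + \sum_{j\in\MM_1}x_{\pi(j)}\ge\sum_{j\in\MM_2}x_{\pi(j)}\ge v$.
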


\begin{proof}
To show that $\MMM(x, \pi, \ell, u, v) \not= \emptyset$, we explicitly construct some $\MM\in\MMM(x, \pi, \ell, u, v)$:
Starting with $\MM_0 := \{\ell, \dots, u\}$, for $i=0, \dots, u-\ell$ define
\begin{align*}
  m_i := \min\{j\in\MM_i\colon x_{\pi(j)}=\min_{k\in\MM_i} x_{\pi(k)}\} \quad\text{and}\quad
  \MM_{i+1} := \MM_i\setminus\{m_i\}\,,
\end{align*}
i.e., $\MM_{i+1}$ is generated by extracting the index with the smallest value in $x\circ\pi$ from $\MM_i$.
By construction, \eqref{equation:MMM:a} holds for all $\MM_i$, $i=0,\dots, u-\ell+1$.
Further, the values $\sum_{j\in\MM_i} x_{\pi(j)}$ are monotonically decreasing in $i=0,\dots, u-\ell+1$.
Since $\MM_{u-\ell+1} = \emptyset$, the admissibility~\eqref{eq:admissibleV} of $v$ implies that
\begin{align*}
  \sum_{j\in\MM_{u-\ell+1}} x_{\pi(j)} = 0 < v \le \sum_{j=\ell}^u x_{\pi(j)} = \sum_{j\in\MM_0} x_{\pi(j)}\,.
\end{align*}
Consequently, there exists a unique $i'\in\{0, \dots, u-\ell\}$ such that 
\begin{align*}
  \sum_{j\in\MM_{i'+1}} x_{\pi(j)} < v \le \sum_{j\in\MM_{i'}} x_{\pi(j)}\,.
\end{align*}
By construction, for all $i=0,\dots,u-\ell$ (and in particular for $i=i'$) it holds that
\begin{align*}
\sum_{j\in\MM_{i}\setminus\{k\}} x_{\pi(j)} \le -m_i + \sum_{j\in\MM_i} x_{\pi(j)} = \sum_{j\in\MM_{i+1}} x_{\pi(j)} \quad\text{for all } k\in\MM_i\,.
\end{align*}
Hence, combining the last two estimates shows that $\MM_{i'}$ also satisfies \eqref{equation:MMM:b} and thus $\MM_{i'}\in\MMM(x, \pi, \ell, u, v)$. This proves $\MMM(x, \pi, \ell, u, v) \not= \emptyset$.

To show that the definition \eqref{definition:xstar} is independent of $\MM\in\MMM(x, \pi, \ell, u, v)$, we claim that
\begin{align*}
x_1^\ast := \min_{j\in\MM_1}x_{\pi(j)} = \min_{j\in\MM_2}x_{\pi(j)} =: x_2^\ast \quad \text{for all} \quad \MM_1, \MM_2 \in \MMM(x, \pi, \ell, u, v) \,.
\end{align*}
To prove this claim, we argue by contradiction and assume $x_1^\ast \not= x_2^\ast$ and, without loss of generality, $x_1^\ast < x_2^\ast$.
Hence, we have $\MM_1 \setminus \MM_2 \not= \emptyset$ and
\begin{align*}
x_1^\ast < x_2^\ast \le x_{\pi(k)} \quad\text{for all } k\in\MM_2\,.
\end{align*}
If there exists $k\in\MM_2\setminus\MM_1$, then \eqref{equation:MMM:a} gives that $x_1^\ast \ge x_{\pi(k)}$.
This contradicts the last estimate and hence proves that $\MM_2\setminus\MM_1=\emptyset$.
Therefore, we deduce that $\MM_2 \subsetneqq \MM_1$.
Using the second inequality in \eqref{equation:MMM:b} for $\MM_1$ and then using the first inequality in \eqref{equation:MMM:b} for $\MM_2$ , we see that
\begin{align*}
v > -x_1^\ast + \sum_{j\in\MM_1} x_{\pi(j)} \ge \sum_{j\in\MM_2} x_{\pi(j)} \ge v\,.
\end{align*}
This contradiction implies that $x_1^\ast = x_2^\ast$ and concludes the proof.
\end{proof}

\subsubsection{{\bf Termination of \alg{QuickMark}}}\label{section:termination}
%%%%%%%%%%%%%%%%%%%%
For any admissible call \alg{QuickMark}$(x, \pi_\old, \ell, u, v)$ of Algorithm~\ref{algorithm:quickMark}, exactly one of three cases --- recursion by step~{\rm(iv)}, termination by step~{\rm(v)}, or recursion by step~{\rm(vi)} --- applies.
The next lemma connects the termination in step~{\rm(v)} directly to the pivot index chosen in step~{\rm(i)}.

\begin{lemma}\label{lemma:xStarTermination}
Let \alg{QuickMark}$(x, \pi_\old, \ell, u, v)$ be an admissible call to Algorithm~\ref{algorithm:quickMark}.
Then, Algorithm~\ref{algorithm:quickMark} terminates with step~{\rm(v)}, if and only if the pivot index $p\in\{\ell,\dots, u\}$ from step~{\rm(i)} satisfies $x_{\pi_\old(p)} = x^\ast(x, \pi_\old, \ell, u, v)$.
\end{lemma}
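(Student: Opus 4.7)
The plan is to exploit the three-block structure of $\pi_\new$ given by~\eqref{eq:partiallyOrdered:gs} --- indices in $\{\ell,\dots,g\}$ carry values strictly greater than $x_{\pi_\old(p)}$, indices in $\{g+1,\dots,s-1\}$ equal to $x_{\pi_\old(p)}$, and indices in $\{s,\dots,u\}$ strictly smaller --- and to characterize via~\eqref{equation:MMM:a}--\eqref{equation:MMM:b} how any $\MM\in\MMM(x,\pi_\new,\ell,u,v)$ distributes across these three blocks. A preliminary observation is that $\MMM(x,\pi,\ell,u,v)$, and hence $x^\ast$, depends on $\pi$ only through the multiset $\{x_{\pi(j)}\colon j\in\{\ell,\dots,u\}\}$, which is preserved by \alg{Partition}; therefore $x^\ast(x,\pi_\old,\ell,u,v) = x^\ast(x,\pi_\new,\ell,u,v)$, and I may work with $\pi_\new$ throughout.

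For the direction ($\Leftarrow$), assuming $x_{\pi_\old(p)}=x^\ast$, I would pick any $\MM\in\MMM(x,\pi_\new,\ell,u,v)$, which is nonempty by Lemma~\ref{lemma:xStarUnique}. Property~\eqref{equation:MMM:a} forces $\{\ell,\dots,g\}\subseteq\MM$ and $\MM\cap\{s,\dots,u\}=\emptyset$; moreover, since the minimum of $x\circ\pi_\new$ over $\MM$ equals $x^\ast=x_{\pi_\old(p)}$, the set $\MM\cap\{g+1,\dots,s-1\}$ has some cardinality $t\in\{1,\dots,s-g-1\}$. Evaluating~\eqref{equation:MMM:b} once for $\MM$ and once with one pivot-valued index removed yields the sandwich $\sigma_g + (t-1)\,x_{\pi_\old(p)} < v \le \sigma_g + t\,x_{\pi_\old(p)}$, which simultaneously rules out the step~(iv) branch (as $\sigma_g<v$) and satisfies the guard of step~(v) (since $t\le s-g-1$).

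For the direction ($\Rightarrow$), I would start from the termination inequalities $\sigma_g < v \le \sigma_g + (s-g-1)\,x_{\pi_\old(p)}$ and explicitly construct $\MM:=\{\ell,\dots,g+m\}$ with $m:=\lceil(v-\sigma_g)/x_{\pi_\old(p)}\rceil\in\{1,\dots,s-g-1\}$. The partition ordering~\eqref{eq:partiallyOrdered:gs} directly yields~\eqref{equation:MMM:a}, while the defining property of the ceiling gives $\sigma_g+(m-1)\,x_{\pi_\old(p)}<v\le\sigma_g+m\,x_{\pi_\old(p)}$, which together with~\eqref{eq:partiallyOrdered:g} produces~\eqref{equation:MMM:b}. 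Hence $\MM\in\MMM(x,\pi_\new,\ell,u,v)$ with $\min_{j\in\MM}x_{\pi_\new(j)} = x_{\pi_\old(p)}$, so Lemma~\ref{lemma:xStarUnique} gives $x^\ast = x_{\pi_\old(p)}$.

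The main subtlety lies in the ($\Leftarrow$) direction: one must invoke~\eqref{equation:MMM:b} with a \emph{pivot-valued} index $k$ (rather than with some $k\in\{\ell,\dots,g\}$) to extract the sharp lower bound $v>\sigma_g+(t-1)\,x_{\pi_\old(p)}$, since any other choice of $k$ would only give a weaker inequality. Positivity $x_{\pi_\old(p)}=x^\ast>0$ --- needed for $t\ge 1$ and to make the rounding in step~(v) well-defined --- is then automatic, as any optimal $\MM$ contains an index attaining $x^\ast$ whose removal must drop the total below $v>0$.
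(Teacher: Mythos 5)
Your proof is correct and follows essentially the same route as the paper's: both directions reduce to the sandwich inclusion $\{\ell,\dots,g\}\subsetneqq\MM\subseteq\{\ell,\dots,s-1\}$ for $\MM\in\MMM(x,\pi_\new,\ell,u,v)$ via~\eqref{eq:partiallyOrdered:gs} and~\eqref{equation:MMM}, combined with Lemma~\ref{lemma:xStarUnique}; your ($\Rightarrow$) direction merely makes the paper's terse argument explicit by constructing a particular $\MM$. One small imprecision: $\MMM(x,\pi,\ell,u,v)$ itself, being a collection of index subsets of $\{\ell,\dots,u\}$, does depend on $\pi$ and not merely on the multiset of values $\{x_{\pi(j)}\colon j\in\{\ell,\dots,u\}\}$, though the consequence you actually use (invariance of $x^\ast$ under \alg{Partition}) is correct and follows directly, as in the paper, from $\pi_\new(\{\ell,\dots,u\})=\pi_\old(\{\ell,\dots,u\})$.
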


\begin{proof}
After step~{\rm(ii)} of Algorithm~\ref{algorithm:quickMark}, it holds that $\pi_\new(\{\ell, \dots, u\}) = \pi_\old(\{\ell, \dots, u\})$ and hence $x^\ast(x, \pi_\new, \ell, u, v) = x^\ast(x, \pi_\old, \ell, u, v)$. 

First, suppose that Algorithm~\ref{algorithm:quickMark} terminates with step~{\rm(v)}, i.e.,
\begin{align*}
  \sum_{j=\ell}^g x_{\pi_\new(j)} = \sigma_g < v \le \sigma_g + (s-g-1)x_{\pi_\old(p)} \stackrel{\eqref{eq:partiallyOrdered:p}}{=} \sum_{j=\ell}^{s-1} x_{\pi_\new(j)} \,.
\end{align*}
Now \eqref{eq:partiallyOrdered:g}--\eqref{eq:partiallyOrdered:s} imply that
\begin{align*}
\{\ell, \dots, g\}\subsetneqq\MM\subseteq\{\ell, \dots, s-1\} \quad\text{for all } \MM\in\MMM(x, \pi_\new, \ell, u, v)\,.
\end{align*}
By definition~\eqref{definition:xstar} and \eqref{eq:partiallyOrdered:g}--\eqref{eq:partiallyOrdered:p}, it follows that $x_{\pi_\old(p)} = x^\ast(x, \pi_\new, \ell, u, v)$.

Conversely, suppose that $x_{\pi_\old(p)} = x^\ast(x, \pi_\new, \ell, u, v)$ and let $\MM\in\MMM(x, \pi_\new, \ell, u, v)$ be arbitrary.
Then, \eqref{eq:partiallyOrdered:g}--\eqref{eq:partiallyOrdered:s} and $x_{\pi_\old(p)} = \min_{j\in\MM} x_{\pi_\new(j)}$ imply that
\begin{align*}
\{\ell, \dots, g\}\subsetneqq\MM\subseteq\{\ell, \dots, s-1\}\,.
\end{align*}
Therefore, \eqref{equation:MMM:b} leads to
\begin{align*}
  \sigma_g = \sum_{j=\ell}^g x_{\pi_\new(j)} < v \le \sum_{j=\ell}^{s-1} x_{\pi_\new(j)} \stackrel{\eqref{eq:partiallyOrdered:p}}{=} \sigma_g + (s-g-1)x_{\pi_\old(p)} \,.
\end{align*}
Consequently, Algorithm~\ref{algorithm:quickMark} terminates in step~{\rm(v)}.
\end{proof}

Whenever an admissible call of Algorithm~\ref{algorithm:quickMark} terminates in step~{\rm(v)}, a solution to the corresponding auxiliary subproblem is provided.

\begin{lemma}\label{lemma:localDoerfler}
Let \alg{QuickMark}$(x, \pi_\old, \ell, u, v)$ be an admissible call to Algorithm~\ref{algorithm:quickMark}.
If \alg{QuickMark}$(x, \pi_\old, \ell, u, v)$ terminates in step~{\rm(v)}, then the output $[\pi_\new, n]$ guarantees that $\MM:=\{\ell, \dots, n\}\in\MMM(x, \pi_\new, \ell, u, v)$.
\end{lemma}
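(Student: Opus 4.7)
The plan is to verify directly the two defining properties \eqref{equation:MMM:a} and \eqref{equation:MMM:b} of the set $\MMM(x,\pi_\new,\ell,u,v)$ for the candidate $\MM := \{\ell,\dots,n\}$, where $n = g+\lceil (v-\sigma_g)/x_{\pi_\old(p)}\rceil$ is the index returned by step~{\rm(v)}.

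First I would locate $n$ relative to the partition boundaries produced by \alg{Partition}. From the two conditions triggering step~{\rm(v)}, namely $\sigma_g < v$ and $v \le \sigma_g + (s-g-1)x_{\pi_\old(p)}$, the ceiling is a positive integer no larger than $s-g-1$, so $g < n \le s-1$. This places the ``tail'' indices $g+1,\dots,n$ of $\MM$ squarely inside the ``equal-to-pivot'' block of \eqref{eq:partiallyOrdered:p}, which is the geometric fact driving everything else.

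Next I would check \eqref{equation:MMM:a}. Splitting $\MM = \{\ell,\dots,g\}\cup\{g+1,\dots,n\}$ and $\{\ell,\dots,u\}\setminus\MM = \{n+1,\dots,s-1\}\cup\{s,\dots,u\}$, the entries in the first halves have $x_{\pi_\new(j)}\ge x_{\pi_\old(p)}$ by \eqref{eq:partiallyOrdered:g}--\eqref{eq:partiallyOrdered:p}, while the entries in the second halves satisfy $x_{\pi_\new(k)}\le x_{\pi_\old(p)}$ by \eqref{eq:partiallyOrdered:p}--\eqref{eq:partiallyOrdered:s}; \eqref{equation:MMM:a} follows by transitivity through $x_{\pi_\old(p)}$.

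For \eqref{equation:MMM:b}, the identity $\sum_{j=\ell}^{n} x_{\pi_\new(j)} = \sigma_g + (n-g)\,x_{\pi_\old(p)}$ (using \eqref{eq:partiallyOrdered:p} on the tail) combined with $n-g = \lceil (v-\sigma_g)/x_{\pi_\old(p)}\rceil \ge (v-\sigma_g)/x_{\pi_\old(p)}$ immediately yields $\sum_{j\in\MM} x_{\pi_\new(j)} \ge v$. The strict lower bound is obtained by observing that the minimum of $x_{\pi_\new(k)}$ over $k\in\MM$ equals $x_{\pi_\old(p)}$ (attained on $\{g+1,\dots,n\}$, which is nonempty since $n>g$); it therefore suffices to check $\sigma_g + (n-g-1)x_{\pi_\old(p)} < v$, which follows from the elementary inequality $\lceil y\rceil - 1 < y$ applied with $y=(v-\sigma_g)/x_{\pi_\old(p)}>0$.

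The only subtle point is the placement $g<n<s$ together with the nonemptiness of $\{g+1,\dots,n\}$; once this is secured, the remainder is routine bookkeeping with the partition inequalities \eqref{eq:partiallyOrdered:gs} and the ceiling identity defining $n$.
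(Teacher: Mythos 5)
Your proof is correct and follows essentially the same route as the paper's: establish $g < n < s$ from the two termination inequalities and $x_{\pi_\old(p)}>0$, deduce \eqref{equation:MMM:a} from the partition ordering \eqref{eq:partiallyOrdered:gs}, and verify both inequalities of \eqref{equation:MMM:b} using the tail identity $\sum_{j=g+1}^{n}x_{\pi_\new(j)}=(n-g)x_{\pi_\old(p)}$ together with the elementary bounds $(v-\sigma_g)/x_{\pi_\old(p)}\le\lceil(v-\sigma_g)/x_{\pi_\old(p)}\rceil<(v-\sigma_g)/x_{\pi_\old(p)}+1$. The only difference is cosmetic: you spell out the transitivity-through-the-pivot step for \eqref{equation:MMM:a} and cite $\lceil y\rceil-1<y$ where the paper writes the equivalent chain of equalities.
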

\begin{proof}
With $p, \pi_\new, g, s, \sigma_g$ from steps~{\rm(i)}--{\rm(iii)}, the termination in Algorithm~\ref{algorithm:quickMark}{\rm(v)} implies that 
\begin{align*}
  \sigma_g < v \le \sigma_g + (s-g-1)x_{\pi_\old(p)}\,.
\end{align*}
Obviously, $x_{\pi_\old(p)} > 0$. Together with \eqref{eq:partiallyOrdered:gs}, this shows that $n := g + \lceil (v-\sigma_g)/x_{\pi_\old(p)}\rceil$ returned in Algorithm~\ref{algorithm:quickMark}{\rm(v)} satisfies that $g < n < s$.
Again, \eqref{eq:partiallyOrdered:gs} implies that $\MM=\{\ell, \dots, n\}$ satisfies \eqref{equation:MMM:a}.
It remains to show \eqref{equation:MMM:b}: The definition of $\sigma_g:=\sum_{j=\ell}^{g} x_{\pi_\new(j)}$ and the choice of $n$ show that for all $k\in\MM$ it holds
\begin{align*}
&\quad \sum_{j\in\MM\setminus\{k\}} x_{\pi_\new(j)} \le -x_{\pi_\old(p)} + \sum_{j\in\MM} x_{\pi_\new(j)} \stackrel{\eqref{eq:partiallyOrdered:p}}{=} \sum_{j=\ell}^{n-1} x_{\pi_\new(j)} = \sigma_g + \sum_{j=g+1}^{n-1} x_{\pi_\new(j)} \\
&\stackrel{\eqref{eq:partiallyOrdered:p}}{=} \sigma_g + (n-g-1)\, x_{\pi_\old(p)} = \sigma_g + (\lceil (v-\sigma_g)/x_{\pi_\old(p)}\rceil - 1)\, x_{\pi_\old(p)} < \sigma_g + v - \sigma_g = v\,.
\end{align*}
Similarly, we see that 
\begin{align*}
v &= v - \sigma_g + \sigma_g  \le \lceil (v-\sigma_g)/x_{\pi_\old(p)}\rceil x_{\pi_\old(p)} + \sigma_g\\
&= (n-g) x_{\pi_\old(p)} + \sigma_g \stackrel{\eqref{eq:partiallyOrdered:p}}{=} \sum_{j=g+1}^{n} x_{\pi_\new(j)} + \sigma_g = \sum_{j\in\MM} x_{\pi_\new(j)}\,.
\end{align*}
Consequently, $\MM$ satisfies \eqref{equation:MMM:b} and we conclude that $\MM := \{\ell, \dots, n\} \in \MMM(x,\pi_\new, \ell, u, v)$.
\end{proof}

Algorithm~\ref{algorithm:callQuickMark} always terminates and provides a set of minimal cardinality satisfying the D\"orfler marking criterion.

\begin{theorem}
If initially called by Algorithm~\ref{algorithm:callQuickMark}, then \alg{QuickMark} terminates after finitely many operations and the output $[\pi_\new, n]$ guarantees that $\pi_\new(\{1, \dots, n\})$ satisfies the D\"orfler criterion~\eqref{eq:doerfler} with minimal cardinality.
\end{theorem}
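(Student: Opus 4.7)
I would prove termination and correctness simultaneously by induction on the length $u-\ell$ of the search interval, applied to every admissible call \alg{QuickMark}$(x, \pi_\old, \ell, u, v)$; admissibility at every recursive invocation is guaranteed by Proposition~\ref{prop:wellPosedness}. The key structural invariant to carry through the induction is that the final output $[\pi_\new, n]$ satisfies $\{\ell,\dots,n\} \in \MMM(x, \pi_\new, \ell, u, v)$ and that $\pi_\new$ coincides with $\pi_\old$ outside $\{\ell,\dots,u\}$. For termination, I would verify that each recursive branch strictly shrinks the interval: in step~{\rm(iv)} the triggering condition $\sigma_g \ge v > 0$ (using admissibility~\eqref{eq:admissibleV}) forces $g \ge \ell$, while by construction $g \le u-1$, so $\{\ell,\dots,g\} \subsetneqq \{\ell,\dots,u\}$; in step~{\rm(vi)} the failure of step~{\rm(v)} together with~\eqref{eq:admissibleV} and the partial ordering~\eqref{eq:partiallyOrdered:gs} forces $\sum_{j=s}^u x_{\pi_\new(j)} > 0$, hence $s \le u$. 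In the corner case $\ell = u$ the unique feasible pivot $p = \ell$ together with admissibility leaves only step~{\rm(v)} available, so the recursion depth is bounded by $N$.

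For the inductive correctness argument, the base case (termination in step~{\rm(v)}) is exactly Lemma~\ref{lemma:localDoerfler}. In the step~{\rm(iv)} branch, the induction hypothesis applied to the recursion on $\{\ell,\dots,g\}$ yields $\{\ell,\dots,n\} \in \MMM(x, \pi_\new, \ell, g, v)$; since that recursion only permutes within $\{\ell,\dots,g\}$, enlarging the complement to include $\{g+1,\dots,u\}$ preserves~\eqref{equation:MMM:a} because~\eqref{eq:partiallyOrdered:gs} supplies $x_{\pi_\new(j)} > x_{\pi_\old(p)} \ge x_{\pi_\new(k)}$ for $j \in \{\ell,\dots,g\}$ and $k \in \{g+1,\dots,u\}$, while clause~\eqref{equation:MMM:b} passes through verbatim. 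In the step~{\rm(vi)} branch, the recursion on $\{s,\dots,u\}$ with shifted goal $v' := v - \sum_{j=\ell}^{s-1} x_{\pi_\new(j)}$ yields $\{s,\dots,n\} \in \MMM(x, \pi_\new, s, u, v')$ by induction (values on $\{\ell,\dots,s-1\}$ are untouched by the inner recursion); concatenation with $\{\ell,\dots,s-1\}$ produces $\{\ell,\dots,n\}$, and clause~\eqref{equation:MMM:a} follows from $x_{\pi_\new(j)} \ge x_{\pi_\old(p)} > x_{\pi_\new(k)}$ for $j \in \{\ell,\dots,s-1\}$, $k \in \{s,\dots,u\}$.

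The main technical obstacle will be the minimality clause~\eqref{equation:MMM:b} in the step~{\rm(vi)} case, for the indices $k \in \{\ell,\dots,s-1\}$: removing such a $k$ drops the total sum only by $x_{\pi_\new(k)} \ge x_{\pi_\old(p)}$, and to bound the residual strictly below $v$ one must combine this with the inductive upper bound $\sum_{j=s}^n x_{\pi_\new(j)} < v' + x_{\pi_\old(p)}$, which itself follows from the inductive minimality clause together with the strict inequality $x_{\pi_\new(k')} < x_{\pi_\old(p)}$ for $k' \in \{s,\dots,u\}$ from~\eqref{eq:partiallyOrdered:gs}. Once the invariant is established, applying it to the initial call issued in step~{\rm(iv)} of Algorithm~\ref{algorithm:callQuickMark} produces $\{1,\dots,n\} \in \MMM(x, \pi_\new, 1, N, \theta \sum_{j=1}^N x_j)$, and Remark~\ref{remark:MMnotation} translates this directly into the statement that $\pi_\new(\{1,\dots,n\})$ satisfies the D\"orfler criterion~\eqref{eq:doerfler} with $\#\pi_\new(\{1,\dots,n\}) = N_{\rm min}$.
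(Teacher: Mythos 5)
Your proof is correct, but it takes a genuinely different route from the paper's. You run an explicit induction on the interval length, carrying the invariant $\{\ell,\dots,n\}\in\MMM(x,\pi_\new,\ell,u,v)$ up through every recursion level and showing separately, for the step~(iv) and step~(vi) branches, how the sub-call's local certificate extends to the enclosing interval; the hardest clause, as you correctly identify, is the minimality condition~\eqref{equation:MMM:b} for indices $k\in\{\ell,\dots,s-1\}$ in the step~(vi) branch, and your resolution (combine $x_{\pi_\new(k)}\ge x_{\pi_\old(p)}$ with the inductive strict bound $\sum_{j=s}^{n}x_{\pi_\new(j)}<v'+x_{\pi_\old(p)}$, which in turn uses $x_{\pi_\new(n)}<x_{\pi_\old(p)}$ from~\eqref{eq:partiallyOrdered:s}) closes that gap. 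The paper instead avoids the level-by-level propagation entirely: it applies Lemma~\ref{lemma:localDoerfler} only to the \emph{last} call and then lifts the local certificate $\{\bar\ell,\dots,n\}\in\MMM(x,\pi_\new,\bar\ell,\bar u,\bar v)$ directly to the global one in a single step, using the admissibility conditions~\eqref{eq:partiallyOrdered:lu} and~\eqref{eq:admissibleV} — which Proposition~\ref{prop:wellPosedness} already maintains inductively — as the bridge. Both arguments are sound; the paper's is shorter because it delegates the recursion-level bookkeeping to Proposition~\ref{prop:wellPosedness}, whereas yours re-derives that bookkeeping in place, which is more self-contained and closer to a textbook divide-and-conquer correctness proof. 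For termination you reach the same conclusion; your direct computation at $\ell=u$ (pivot forced to $p=\ell$, hence $g=\ell-1$, $s=\ell+1$, $\sigma_g=0<v\le x_{\pi_\old(p)}$, so step~(v) fires) is even a bit more elementary than the paper's detour through Lemma~\ref{lemma:xStarUnique} and Lemma~\ref{lemma:xStarTermination}.
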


\begin{proof}
At latest the $(N-1)$-st recursive call of \alg{QuickMark} terminates in step~{\rm(v)} of Algorithm~\ref{algorithm:quickMark}:
Proposition~\ref{prop:wellPosedness} shows that all (subsequent) calls of \alg{QuickMark} are admissible.
For any recursive call \alg{QuickMark}$(x, \pi_\new, \ell', u', v')$ initiated by step~{\rm(iv)} or step~{\rm(vi)} of \alg{QuickMark}$(x, \pi_\old, \ell, u, v)$, it holds that $\{\ell', \dots, u'\}\subsetneqq\{\ell, \dots, u\}$.
Therefore, if none of the first $N-2$ recursive calls of \alg{QuickMark} terminates in step~{\rm(v)} of Algorithm~\ref{algorithm:quickMark}, for the $(N-1)$-st recursive call denoted by \alg{QuickMark}$(x, \bar{\pi}, \bar{\ell}, \bar{u}, \bar{v})$ it holds that $\bar{\ell}=\bar{u}$.
Consequently, for this call the pivot index is chosen as $\bar{p}=\bar{\ell}=\bar{u}$ in step~{\rm(i)} of Algorithm~\ref{algorithm:quickMark}.
Using Lemma~\ref{lemma:xStarUnique}, the admissibility of \alg{QuickMark}$(x, \bar{\pi}, \bar{\ell}, \bar{u}, \bar{v})$ implies that $\MMM(x, \bar{\pi}, \bar{\ell}, \bar{u}, \bar{v}) \not= \emptyset$.
We infer that $\{\bar{p}\} \in \MMM(x, \bar{\pi}, \bar{\ell}, \bar{u}, \bar{v})$ and thus
\begin{align*}
x^\ast(x, \bar{\pi}, \bar{\ell}, \bar{u}, \bar{v}) 
\stackrel{\eqref{definition:xstar}}{=} \min_{j\in\{\bar{p}\}} x_{\bar{\pi}(j)}
= x_{\bar{\pi}(\bar{p})}\,.
\end{align*}
Hence, Lemma~\ref{lemma:xStarTermination} implies termination of \alg{QuickMark}$(x, \bar{\pi}, \bar{\ell}, \bar{u}, \bar{v})$ in Algorithm~\ref{algorithm:quickMark}{\rm(v)}.

It remains to show that $\MM':=\pi_\new(\{1, \dots, n\})$ satisfies \eqref{eq:doerfler} with minimal cardinality.
In view of Remark~\ref{remark:MMnotation}, we will show that $\MM:=\{1, \dots, n\}\in\MMM(x, \pi_\new, 1, N, \theta\sum_{j=1}^N x_j)$:
Suppose that $[\pi_\new, n]$ are obtained by Algorithm~\ref{algorithm:callQuickMark}{\rm(iv)}.
Denote the last recursive call of Algorithm~\ref{algorithm:quickMark} by \alg{QuickMark}$(x, \bar{\pi}_\old, \bar{\ell}, \bar{u}, \bar{v})$.
By Proposition~\ref{prop:wellPosedness}, this call is admissible and $\pi_\new (= \bar{\pi}_\new)$ differs from $\bar{\pi}_\old$ only for the indices $\{\bar{\ell}, \dots, \bar{u}\}\subseteq\{1, \dots, N\}$.

By Lemma~\ref{lemma:localDoerfler}, it holds that $\{\bar{\ell}, \dots, n\}\in\MMM(x, \pi_\new, \bar{\ell}, \bar{u}, \bar{v})$.
Thus, the partial ordering \eqref{eq:partiallyOrdered:l}--\eqref{eq:partiallyOrdered:u} shows that
\begin{align}\label{eq:MMsat_a}
x_{\bar{\pi}_\new(j)} \ge x_{\bar{\pi}_\new(k)} \quad \text{for all } j\in\MM  \text{ and all } k\in\{1, \dots, N\}\setminus\MM\,.
\end{align}
By Definition~\ref{def:admissible}{\rm(b)}, it holds that
\begin{align*}
  \bar{v} = \theta\sum_{j=1}^N x_j - \sum_{j=1}^{\bar{\ell}-1} x_{\bar{\pi}_\old(j)} = \theta\sum_{j=1}^N x_j - \sum_{j=1}^{\bar{\ell}-1} x_{\pi_\new(j)} \,.
\end{align*}
Since $\{\bar{\ell}, \dots, n\}\in\MMM(x, \pi_\new, \bar{\ell}, \bar{u}, \bar{v})$, condition \eqref{equation:MMM:b} reads
\begin{align*}
\sum_{j=\bar{\ell}}^n x_{\pi_\new(j)} &\ge \bar{v} > -x_{\pi_\new(k)} + \sum_{j=\bar{\ell}}^n x_{\pi_\new(j)} &\quad &\text{for all } \bar{\ell}\le k\le n\,.
\end{align*}
Using the partial ordering \eqref{eq:partiallyOrdered:l} and adding $\sum_{j=1}^{\bar{\ell}-1} x_{\pi_\new(j)}$ to the last estimate, we get 
\begin{align}\label{eq:MMsat_b}
\sum_{j=1}^n x_{\pi_\new(j)} &\ge \theta\sum_{j=1}^N x_j > -x_{\pi_\new(k)} + \sum_{j=1}^n x_{\pi_\new(j)} &\quad &\text{for all } 1 \le k \le N\,.
\end{align}
Consequently, \eqref{eq:MMsat_a}--\eqref{eq:MMsat_b} show that $\MM\in\MMM(x, \pi_\new, 1, N, \theta\sum_{j=1}^N x_j)$.
\end{proof}

%!TEX root = doerfler.tex
%%%%%%%%%%%%%%%%%%%%%%%%%%%%%%%%%%%%%%%%%%%%%%%%%%%%%%%%%%%%%%%%%%%%%%%%%%%%%%%%%%%
\subsection{Computational complexity of the \alg{QuickMark} algorithm}\label{section:complexity}
%%%%%%%%%%%%%%%%%%%%%%%%%%%%%%%%%%%%%%%%%%%%%%%%%%%%%%%%%%%%%%%%%%%%%%%%%%%%%%%%%%%

Exploiting the fact that selection problems can always be solved in linear time~\cite{BFPRT1973}, we show that the pivoting strategy in Algorithm~\ref{algorithm:pivot} can be chosen such that, for any $x\in\R_\star^N$ and any $0<\theta<1$, Algorithm~\ref{algorithm:callQuickMark} always terminates after $\OO(N)$ operations.
Consider choosing the median of $\{x_{\pi(j)}\colon j=\ell, \dots, u\}$ as pivot element.
\begin{algorithm}[{{$[p] =$ \alg{Median}$(x, \pi, \ell, u)$}}]{\label{algorithm:median}}
\textbf{Input:} Vector $x\in\R^N$, permutation $\pi$ on $\{1, \dots, N\}$, lower and upper index $1\le \ell\le u\le N$.
\begin{itemize}
\item[\rm(i)] Determine an index $p\in \{\ell, \dots, u\}$ such that
\begin{subequations}\label{eq:median}
\begin{align}
  \label{eq:median:smaller} \#\{j\in\{\ell,\dots,u\}\colon x_{\pi(j)} < x_{\pi(p)}\} &\le (u-\ell+1)/2\,,\\
  \label{eq:median:greater} \#\{j\in\{\ell,\dots,u\}\colon x_{\pi(j)} > x_{\pi(p)}\} &\le (u-\ell+1)/2\,.
\end{align}
\end{subequations}
\end{itemize}
\textbf{Output:} Median index $p$.
\end{algorithm}

According to~\cite{BFPRT1973}, Algorithm~\ref{algorithm:median} can be implemented such that it always terminates in linear time $\OO(u-\ell+1)$. This leads to the following theorem.
\begin{theorem}\label{thm:complexity}
If \alg{Pivot} is replaced by \alg{Median} in Algorithm~\ref{algorithm:quickMark}{\rm(i)}, then, for any $x\in\R_\star^N$ and any $0<\theta<1$, Algorithm~\ref{algorithm:callQuickMark} terminates after $\OO(N)$ operations.
In particular, the multiplicative constant hidden in the Landau notation is generic and independent of $\theta$ and $N$.
\end{theorem}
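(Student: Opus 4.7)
The plan is to bound separately the cost of a single invocation of \alg{QuickMark} (excluding the recursive call it may trigger) and the size of the subproblem handed to the recursion, and then to collapse these via a geometric series.

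First I would show that a single call \alg{QuickMark}$(x, \pi_\old, \ell, u, v)$, apart from any recursive call in step~{\rm(iv)} or {\rm(vi)}, costs $\OO(u-\ell+1)$. The call to \alg{Median} in step~{\rm(i)} is $\OO(u-\ell+1)$ by the classical BFPRT result cited as~\cite{BFPRT1973}. A standard three-way (Dutch-national-flag) implementation of \alg{Partition} in step~{\rm(ii)} runs in $\OO(u-\ell+1)$ and produces the indices $g$ and $s$. Computing $\sigma_g$ in step~{\rm(iii)} costs $\OO(g-\ell+1)\le\OO(u-\ell+1)$, and the remaining arithmetic in steps~{\rm(iv)}--{\rm(vi)} is $\OO(1)$.

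Next I would use the defining property~\eqref{eq:median} of \alg{Median} to bound the size of the subproblem passed to the recursive call. If the pivot $p$ is a median index on $\{\ell,\dots,u\}$, then by~\eqref{eq:median:greater} and~\eqref{eq:partiallyOrdered:g} the number of indices with $x_{\pi_\new(j)}>x_{\pi_\old(p)}$ satisfies $g-\ell+1\le(u-\ell+1)/2$, and by~\eqref{eq:median:smaller} and~\eqref{eq:partiallyOrdered:s} we likewise get $u-s+1\le(u-\ell+1)/2$. Consequently, whether recursion happens via step~{\rm(iv)} on $\{\ell,\dots,g\}$ or via step~{\rm(vi)} on $\{s,\dots,u\}$, the length of the index range strictly decreases by at least a factor of two.

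Finally I would put these two facts together. Let $T(m)$ denote the worst-case number of operations of a call to \alg{QuickMark} whose working range $\{\ell,\dots,u\}$ has length $m=u-\ell+1$. The previous two steps give a recurrence of the form $T(m)\le T(\lfloor m/2\rfloor)+C\,m$ for some absolute constant $C>0$, with $T(1)=\OO(1)$ (in which case step~{\rm(v)} terminates immediately by Lemma~\ref{lemma:xStarTermination}). Unrolling yields $T(m)\le C\,m\sum_{k\ge 0}2^{-k}\le 2Cm$, so the initial call from Algorithm~\ref{algorithm:callQuickMark}{\rm(iv)}, for which $m=N$, costs $\OO(N)$. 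The $\OO(N)$ overhead of Algorithm~\ref{algorithm:callQuickMark}{\rm(i)}--{\rm(iii)} (initializing $\pi_\id$, computing $v$) does not change the asymptotics, and the hidden constant inherits from the constants in BFPRT and in \alg{Partition}, neither of which depends on $\theta$ or $N$.

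The only step that is not fully routine is justifying that \alg{Partition} can be realized in time $\OO(u-\ell+1)$; I would note explicitly that a standard in-place three-way partition scheme achieves this while producing $\pi_\new$, $g$, and $s$ with the properties required in Algorithm~\ref{algorithm:partition}. Everything else is a direct bookkeeping argument from the median property~\eqref{eq:median} and the geometric-series bound.
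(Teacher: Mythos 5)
Your proof is correct and follows essentially the same route as the paper's: bound the non-recursive work of a single \alg{QuickMark} call by $\OO(u-\ell+1)$ (BFPRT median, linear partition, linear prefix sum), invoke the median property~\eqref{eq:median} to show the recursive subrange has at most half the length, and close the recurrence $T(m)\le Cm+T(m/2)$ via a geometric series to obtain $T(N)\le 2CN$. The only cosmetic difference is that you spell out a Dutch-national-flag implementation of \alg{Partition}, whereas the paper simply asserts its linear cost.
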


\begin{proof}
Obviously, steps~{\rm(i)}--{\rm(iii)} of Algorithm~\ref{algorithm:callQuickMark} can be realized using $\OO(N)$ operations.
Moreover, the permutation $\pi$ can be represented by additionally storing an array containing $N$ indices.
It remains to show that the call to \alg{QuickMark} in step~{\rm(iv)} terminates at linear costs $\OO(N)$.

Consider a (possibly recursive) call of \alg{QuickMark}$(x, \pi_\old, \ell, u, v)$. The median (-index) of $x\circ\pi$ with respect to the indices $\{\ell, \dots, u\}$ of Algorithm~\ref{algorithm:quickMark}{\rm(i)} can be determined at linear cost $\OO(u-\ell+1)$; see~\cite[Theorem~1]{BFPRT1973}.
The partition in Algorithm~\ref{algorithm:quickMark}{\rm(ii)} can be determined at linear cost $\OO(u-\ell+1)$.
In particular, this can easily be implemented by temporarily storing not more than $u-\ell+1$ additional indices $\pi_\mod$.
Algorithm~\ref{algorithm:quickMark}{\rm(iii)} is of cost $g-\ell+1 < u-\ell+1$ and steps~{\rm(iv)}--{\rm(vi)} of Algorithm~\ref{algorithm:quickMark} are of constant cost $\OO(1)$ plus, in the case of step~{\rm(iv)} or  step~{\rm(vi)}, the cost of the recursive call on at most $(u-\ell+1)/2$ indices; see \eqref{eq:median}.
We have shown that for a generic constant $C\ge 1$, the costs for an iteration of Algorithm~\ref{algorithm:quickMark} are bounded by $C(u-\ell+1)$ plus the costs of a potential recursive call.

Now, denote the computational costs of a call of \alg{QuickMark}$(x, \pi, \ell, u, v)$ by $T(m)$, where $m = \#\{\ell, \dots, u\} = u-\ell+1$  is the number of elements under consideration.
Then, due to the choice \alg{Pivot} $:=$ \alg{Median}, using \eqref{eq:median:greater} in Algorithm~\ref{algorithm:quickMark}{\rm(iv)}, or \eqref{eq:median:smaller} in Algorithm~\ref{algorithm:quickMark}{\rm(vi)}, respectively, it follows inductively that
\begin{align*}
T(N) \le CN + T(N/2) \le \dots \le CN\sum_{j=0}^\infty 2^{-j} = 2CN\,.
\end{align*}
For the choice \alg{Pivot} $:=$ \alg{Median}, we conclude that Algorithm~\ref{algorithm:quickMark}, and hence Algorithm~\ref{algorithm:callQuickMark}, always terminates at linear costs.
\end{proof}

\begin{remark}\label{re:pivoting}
{\rm(i)} In the complexity estimate of Theorem~\ref{thm:complexity} the dependency on $0<\theta<1$ is avoided due to the choice of \alg{Median} as pivoting strategy.
Other pivoting strategies may lead to a hidden constant depending on $0<\theta<1$.\\
{\rm(ii)} If Algorithm~\ref{algorithm:quickMark}{\rm(i)} chooses the pivot index $p \in \{\ell, \dots, u\}$ always randomly, then the algorithm might perform faster on average.
  However, this would lead to quadratic worst-case performance $\OO(N^2)$ of Algorithm~\ref{algorithm:callQuickMark}.\\
{\rm(iii)} Theorem~\ref{thm:complexity} is proved for choosing the $50\%$-quantile, i.e., the median element is the pivot (Algorithm~\ref{algorithm:median}).
  If any other fixed quantile is chosen as the pivot, then Theorem~\ref{thm:complexity} still holds true.\\
{\rm(iv)} If for fixed $q \in (0,1)$ one chooses pivoting by the $q$-quantile rather than by the median in Theorem~\ref{thm:complexity}, then a call of \alg{QuickMark}($x, \pi_\old, \ell, u, v$) potentially leads to a recursive call in step~{\rm(iv)} or step~{\rm(vi)} of Algorithm~\ref{algorithm:quickMark} on up to $\max\{q, 1-q\}(u-\ell+1)$ indices.
Hence, the computational costs of Algorithm~\ref{algorithm:quickMark} with this pivoting strategy called on $N$ indices can then be estimated by
\begin{align*}
T(N) \le CN + T(\max\{q, 1-q\}N) \le \dots \le CN\sum_{j=0}^\infty \max\{q, 1-q\}^j = \frac{CN}{\min\{q, 1-q\}}\,.
\end{align*}
Obviously, choosing the median as pivot (i.e., $q = 1/2$) optimizes this estimate.
\end{remark}

%%%%%%%%%%%%%%%%%%%%%%%%%%%%%%%%%%%%%%%%%%%%%%%%%%%%%%%%%%%%%%%%%%%%%%%%%%%%%%%%%%%
\subsection{Remarks on the implementation of \alg{QuickMark}}\label{section:implementation}
%%%%%%%%%%%%%%%%%%%%%%%%%%%%%%%%%%%%%%%%%%%%%%%%%%%%%%%%%%%%%%%%%%%%%%%%%%%%%%%%%%%

Up to now, we focused on the idea and the theoretical aspects of the \alg{QuickMark} algorithm, namely verifying Theorem~\ref{thm:mainTheorem}.
We conclude this section by discussing some adaptions to the algorithm as it is presented in Section~\ref{section:quickMark}, in order to arrive at an efficient competitive \texttt{C++11} implementation using routines provided by the standard library.
Ultimately, we compare the performance of our implementation to an implementation of Algorithm~\ref{algorithm:doerfler} based on the sorting routine provided by the standard library.

The following observations lead to an efficient \alg{QuickMark} implementation relying on routines provided by the standard library.
\begin{remark}\label{re:implementation}
{\rm(i)} The data structure for given refinement indicators $\eta_\ell(T)$ for all $T\in\TT_\ell$ is usually a vector \emph{\texttt{eta}}, where \emph{\texttt{eta[j]}} refers to the estimated error for the $j$-th element in the data structure representing the mesh $\TT_\ell$.
To preserve this relation, one aims to avoid manipulating (i.e., reordering) \emph{\texttt{eta}}.\\
{\rm(ii)} \alg{QuickMark} as formulated in Algorithm~\ref{algorithm:quickMark} avoids manipulation of \emph{\texttt{eta}} by operating on a permutation $\pi$ only.
Hence, in a straight forward implementation of Algorithm~\ref{algorithm:quickMark}, which uses a permutation $\pi$ to access elements of the array $x\circ\pi$, data is not accessed contiguously and a considerable performance penalty is introduced.\\
{\rm(iii)} Hence, to achieve a more efficient implementation of \alg{QuickMark}, one would rather alter the algorithm to operate on (and modify) a temporary copy \emph{\texttt{x}} of \emph{\texttt{eta}} to determine the value $x^\ast := x^\ast(\emph{\texttt{eta}}\,, \pi_\id, 1, N, \theta\sum_{j=1}^N x_j)$.
The desired set $\MM$ is then given by the union of $\{j\colon \emph{\texttt{eta[j]}} > x^\ast\}$ and a proper subset of $\{j\colon \emph{\texttt{eta[j]}} = x^\ast\}$.\\
{\rm(iv)} For the ease of presentation, in \alg{Partition}~(Algorithm~\ref{algorithm:partition}) a partition into three subarrays --- elements strictly greater than, equal to, and strictly smaller than the pivot element --- is demanded.
In view of using standard library partition implementations, we note that this is not necessary:
It suffices to partition into two subarrays:
One with elements greater than or equal to the pivot element, the pivot element itself, and one with elements smaller than or equal to the pivot element.
Then, as long as it is ensured, that other elements with the same value as the pivot element are distributed evenly among the two subarrays, Theorem~\ref{thm:complexity} holds true.\\
{\rm(v)} When using a partition based algorithm to determine a quantile, e.g., the median element, as the pivot element, the subarray is already partitioned after Algorithm~\ref{algorithm:quickMark}{\rm(i)}.
Hence, Algorithm~\ref{algorithm:quickMark}{\rm(ii)} can be skipped.
\end{remark}

\lstset{language=C++,
                basicstyle=\scriptsize\ttfamily,
                keywordstyle=\color{green!50!black}\ttfamily,
                commentstyle=\color{blue!50!black}\ttfamily,
                rulecolor=\color{black},
                frame=single
}

Using headers \texttt{<vector>}, \texttt{<iterator>}, \texttt{<algorithm>}, \texttt{<functional>}, and \texttt{<numeric>}, a \texttt{C++11} implementation of \alg{QuickMark} adapted to the observations of Remark~\ref{re:implementation} relying on routines from the standard library could read as follows.
\lstinputlisting{cpp/xStar_noHeaders.cpp}

Passing refinement indicators $\eta_\ell(T)$ for all $T\in\TT_\ell$ (\texttt{eta}) and an adaptivity parameter $0<\theta<1$ (\texttt{theta}) to the following adaption of Algorithm~\ref{algorithm:callQuickMark}, then yields the desired value $x^\ast$, such that the set $\MM$ is readily obtained; see Remark~\ref{re:implementation}{\rm(iii)}.
\lstinputlisting{cpp/initialCall_xStar_noHeaders.cpp}

%%%%%%%%%%%%%%%%%%%%
\begin{remark}
While \alg{QuickMark} can be implemented such that its complexity is linear even in the worst case, the worst-case complexity of the given \texttt{C++} function \texttt{xStarKernel} is (standard library-) implementation dependent:\\
The \texttt{C++} standard requires \texttt{std::nth\_element} to be of linear complexity only on average, while lacking any worst-case restriction~\cite{isoC++}.
A quality introspective selection implementation of \texttt{std::nth\_element} could be realized as proposed in~\cite{musser1997}:
As fast as the Quickselect algorithm~\cite{hoare1961_find} in practice, maintaining linear worst-case complexity by relying on the median of medians algorithm from~\cite{BFPRT1973} as fallback strategy.
\end{remark}

We conclude by comparing the performance of the \texttt{C++} standard library implementation \texttt{std::sort} to our implementation \texttt{xStarKernel} above.
This is reasonable, since those two routines are the core components of Algorithm~\ref{algorithm:doerfler} and Algorithm~\ref{algorithm:callQuickMark} (adapted to the observations of Remark~\ref{re:implementation}), respectively.
The completing components of Algorithm~\ref{algorithm:doerfler} and Algorithm~\ref{algorithm:callQuickMark} are very similar for both approaches and in particular, make up for only a small fraction of the overall computational cost of the respective algorithm.

We consider adaptivity parameters $\theta \in \{0.1, 0.25, 0.5, 0.75, 0.9\}$ and vectors of length $N \in \{10^j\colon j=3, \dots, 9\}$.
For each combination of $\theta$ and $N$ we generate $\num{30}$ vectors \texttt{eta} of length $N$ filled with uniformly distributed pseudorandom double-precision values between $0$ and $1$.
The core routines \texttt{std::sort} and \texttt{xStarKernel} are called on (copies of) each of these vectors and the computational times are measured.
The sources were compiled with GNU compiler \texttt{g++} version \texttt{5.5.0}, optimization flag \texttt{-O3}, and \texttt{-std=c++11} enabled.
All computations were performed on a machine with \SI{32}{\giga\byte} of RAM and an Intel Core i7-6700 CPU~\cite{i7-6700} with a base frequency of \SI{3.4}{\giga\hertz}.

For all test cases $(\theta, N) \in \{0.1, 0.25, 0.5, 0.75, 0.9\}\times\{10^j\colon j=3, \dots, 9\}$, the measured times for the fastest (Table~\ref{fig:table_min}), average (Table~\ref{fig:table_avg}) and slowest (Table~\ref{fig:table_max}) run out of $\num{30}$ runs is given.
To emphasize the improved complexity of Algorithm~\ref{algorithm:callQuickMark} over Algorithm~\ref{algorithm:doerfler}, the measurements for $\theta = 0.5$ are visualized in Figure~\ref{fig:complexity}:
While the computational time spent per element increases logarithmically with the problem size for \texttt{std::sort}, it remains constant for \texttt{xStarKernel}.
Hence, as expected, the \alg{QuickMark} strategy clearly outperforms the approach of Algorithm~\ref{algorithm:doerfler} based on sorting.
Moreover, while the measured time behaves like $\mathcal{O}(N\log N)$ for sorting, it only grows linearly with respect to the problem size for \alg{QuickMark} as predicted by Theorem~\ref{thm:complexity}.
In accordance with Theorem~\ref{thm:complexity}, different values of $0<\theta<1$ do not influence the performance of the algorithm.

\begin{table}
\footnotesize
\sisetup{
  output-exponent-marker = \text{e},
  table-format=+1.1e+1,
  exponent-product={},
  retain-explicit-plus,
  retain-zero-exponent,
  round-mode=places, 
  round-precision=1
}
 \begin{tabular}{||c||c|c||c|c||c|c||c|c||c|c||}
    \hline
    \multirow{2}{*}{$N$} & \multicolumn{2}{c||}{$\theta=0.1$} & \multicolumn{2}{c||}{$\theta=0.25$} & \multicolumn{2}{c||}{$\theta=0.5$} & \multicolumn{2}{c||}{$\theta=0.75$} & \multicolumn{2}{c||}{$\theta=0.9$} \\
    \cline{2-11}
   & \texttt{sort} & \texttt{xStar} & \texttt{sort} & \texttt{xStar}& \texttt{sort} & \texttt{xStar}& \texttt{sort} & \texttt{xStar}& \texttt{sort} & \texttt{xStar} \\
  \hline
$10^3$ & \num{3.42e-05} & \num{1.45e-05} & \num{3.46e-05} & \num{1.32e-05} & \num{3.47e-05} & \num{1.19e-05} & \num{3.41e-05} & \num{9.90e-06} & \num{3.41e-05} & \num{1.37e-05} \\ 
$10^4$ & \num{4.47e-04} & \num{1.31e-04} & \num{4.44e-04} & \num{1.20e-04} & \num{4.49e-04} & \num{1.12e-04} & \num{4.41e-04} & \num{8.48e-05} & \num{4.53e-04} & \num{1.14e-04} \\ 
$10^5$ & \num{5.46e-03} & \num{1.15e-03} & \num{5.41e-03} & \num{1.21e-03} & \num{5.50e-03} & \num{1.12e-03} & \num{5.47e-03} & \num{1.11e-03} & \num{5.49e-03} & \num{1.13e-03} \\ 
$10^6$ & \num{6.56e-02} & \num{1.20e-02} & \num{6.48e-02} & \num{1.14e-02} & \num{6.50e-02} & \num{1.21e-02} & \num{6.54e-02} & \num{1.12e-02} & \num{6.54e-02} & \num{1.12e-02} \\ 
$10^7$ & \num{7.60e-01} & \num{1.17e-01} & \num{7.59e-01} & \num{1.09e-01} & \num{7.56e-01} & \num{1.09e-01} & \num{7.63e-01} & \num{1.23e-01} & \num{7.59e-01} & \num{1.09e-01} \\ 
$10^8$ & \num{8.67e+00} & \num{1.21e+00} & \num{8.71e+00} & \num{1.10e+00} & \num{8.69e+00} & \num{1.21e+00} & \num{8.68e+00} & \num{1.07e+00} & \num{8.73e+00} & \num{1.13e+00} \\ 
$10^9$ & \num{9.79e+01} & \num{1.17e+01} & \num{9.75e+01} & \num{1.17e+01} & \num{9.79e+01} & \num{1.13e+01} & \num{9.75e+01} & \num{1.16e+01} & \num{9.81e+01} & \num{1.19e+01} \\ 
  \hline
\end{tabular}

\smallskip
\caption{Measured time (in seconds) for finding $x^\ast$ of a given double-precision vector of length $N$, versus the time it takes to sort it.
Times for the fastest run out of \num{30} runs.}
 \label{fig:table_min}
\end{table}

%%%%%%%%%%%%%%%%%%%%

\begin{table}
\footnotesize
\sisetup{
  output-exponent-marker = \text{e},
  table-format=+1.1e+1,
  exponent-product={},
  retain-explicit-plus,
  retain-zero-exponent,
  round-mode=places, 
  round-precision=1
}
 \begin{tabular}{||c||c|c||c|c||c|c||c|c||c|c||}
    \hline
    \multirow{2}{*}{$N$} & \multicolumn{2}{c||}{$\theta=0.1$} & \multicolumn{2}{c||}{$\theta=0.25$} & \multicolumn{2}{c||}{$\theta=0.5$} & \multicolumn{2}{c||}{$\theta=0.75$} & \multicolumn{2}{c||}{$\theta=0.9$} \\
    \cline{2-11}
   & \texttt{sort} & \texttt{xStar} & \texttt{sort} & \texttt{xStar}& \texttt{sort} & \texttt{xStar}& \texttt{sort} & \texttt{xStar}& \texttt{sort} & \texttt{xStar} \\
  \hline
$10^3$ & \num{3.63e-05} & \num{1.65e-05} & \num{3.56e-05} & \num{1.56e-05} & \num{3.57e-05} & \num{1.57e-05} & \num{3.75e-05} & \num{1.62e-05} & \num{3.51e-05} & \num{1.51e-05} \\ 
$10^4$ & \num{4.61e-04} & \num{1.45e-04} & \num{4.64e-04} & \num{1.45e-04} & \num{4.64e-04} & \num{1.40e-04} & \num{4.67e-04} & \num{1.40e-04} & \num{4.78e-04} & \num{1.42e-04} \\ 
$10^5$ & \num{5.62e-03} & \num{1.38e-03} & \num{5.67e-03} & \num{1.40e-03} & \num{5.88e-03} & \num{1.41e-03} & \num{5.65e-03} & \num{1.36e-03} & \num{5.56e-03} & \num{1.34e-03} \\ 
$10^6$ & \num{6.66e-02} & \num{1.40e-02} & \num{6.65e-02} & \num{1.37e-02} & \num{6.65e-02} & \num{1.36e-02} & \num{6.64e-02} & \num{1.30e-02} & \num{6.64e-02} & \num{1.33e-02} \\ 
$10^7$ & \num{7.70e-01} & \num{1.40e-01} & \num{7.69e-01} & \num{1.38e-01} & \num{7.70e-01} & \num{1.34e-01} & \num{7.70e-01} & \num{1.35e-01} & \num{7.71e-01} & \num{1.35e-01} \\ 
$10^8$ & \num{8.77e+00} & \num{1.41e+00} & \num{8.77e+00} & \num{1.37e+00} & \num{8.76e+00} & \num{1.38e+00} & \num{8.78e+00} & \num{1.36e+00} & \num{8.79e+00} & \num{1.33e+00} \\ 
$10^9$ & \num{9.87e+01} & \num{1.39e+01} & \num{9.87e+01} & \num{1.34e+01} & \num{9.87e+01} & \num{1.34e+01} & \num{9.88e+01} & \num{1.30e+01} & \num{9.88e+01} & \num{1.34e+01} \\ 
  \hline
\end{tabular}

\smallskip
\caption{Measured time (in seconds) for finding $x^\ast$ of a given double-precision vector of length $N$, versus the time it takes to sort it.
Average time for a run out of \num{30} runs.}
 \label{fig:table_avg}
\end{table}

%%%%%%%%%%%%%%%%%%%%

\begin{table}
\footnotesize
\sisetup{
  output-exponent-marker = \text{e},
  table-format=+1.1e+1,
  exponent-product={},
  retain-explicit-plus,
  retain-zero-exponent,
  round-mode=places, 
  round-precision=1
}
 \begin{tabular}{||c||c|c||c|c||c|c||c|c||c|c||}
    \hline
    \multirow{2}{*}{$N$} & \multicolumn{2}{c||}{$\theta=0.1$} & \multicolumn{2}{c||}{$\theta=0.25$} & \multicolumn{2}{c||}{$\theta=0.5$} & \multicolumn{2}{c||}{$\theta=0.75$} & \multicolumn{2}{c||}{$\theta=0.9$} \\
    \cline{2-11}
   & \texttt{sort} & \texttt{xStar} & \texttt{sort} & \texttt{xStar}& \texttt{sort} & \texttt{xStar}& \texttt{sort} & \texttt{xStar}& \texttt{sort} & \texttt{xStar} \\
  \hline
$10^3$ & \num{6.02e-05} & \num{1.83e-05} & \num{3.66e-05} & \num{1.75e-05} & \num{3.73e-05} & \num{1.77e-05} & \num{6.65e-05} & \num{2.87e-05} & \num{3.68e-05} & \num{1.74e-05} \\ 
$10^4$ & \num{4.98e-04} & \num{1.76e-04} & \num{5.12e-04} & \num{1.87e-04} & \num{5.13e-04} & \num{1.63e-04} & \num{5.04e-04} & \num{1.65e-04} & \num{5.19e-04} & \num{1.67e-04} \\ 
$10^5$ & \num{6.22e-03} & \num{1.62e-03} & \num{6.09e-03} & \num{1.61e-03} & \num{6.83e-03} & \num{1.88e-03} & \num{6.05e-03} & \num{1.94e-03} & \num{5.72e-03} & \num{1.55e-03} \\ 
$10^6$ & \num{6.84e-02} & \num{1.64e-02} & \num{6.84e-02} & \num{1.56e-02} & \num{6.80e-02} & \num{1.52e-02} & \num{6.79e-02} & \num{1.54e-02} & \num{6.77e-02} & \num{1.59e-02} \\ 
$10^7$ & \num{7.85e-01} & \num{1.56e-01} & \num{7.78e-01} & \num{1.54e-01} & \num{7.80e-01} & \num{1.54e-01} & \num{7.78e-01} & \num{1.54e-01} & \num{8.11e-01} & \num{1.56e-01} \\ 
$10^8$ & \num{8.85e+00} & \num{1.58e+00} & \num{8.84e+00} & \num{1.52e+00} & \num{8.85e+00} & \num{1.57e+00} & \num{8.84e+00} & \num{1.51e+00} & \num{8.90e+00} & \num{1.48e+00} \\ 
$10^9$ & \num{1.0e+02} & \num{1.59e+01} & \num{1.0e+02} & \num{1.51e+01} & \num{1.0e+02} & \num{1.55e+01} & \num{1.0e+02} & \num{1.53e+01} & \num{1.0e+02} & \num{1.50e+01} \\ 
  \hline
\end{tabular}

\smallskip
\caption{Measured time (in seconds) for finding $x^\ast$ of a given double-precision vector of length $N$, versus the time it takes to sort it.
Slowest time for a run out of \num{30} runs.}
 \label{fig:table_max}
\end{table}

%%%%%%%%%%%%%%%%%%%%
\begin{figure}
\begin{tikzpicture}
\pgfplotstableread{plots/mark_theta0.50.dat}{\dataMark}
\pgfplotstableread{plots/sort_theta0.50.dat}{\dataSort}
\pgfplotsset{every tick label/.append style={font=\footnotesize}}
\begin{semilogxaxis}[
width = .65\textwidth,
height = 6cm,
xlabel={\footnotesize{Array length \footnotesize{$N$}}},
ylabel={\footnotesize{$T(N) / N$ ($\SI{}{\nano\second}$)}},
xtick={1e3, 1e5, 1e7, 1e9},
ytick={10, 30, 50, 70, 90},
legend pos=outer north east,
]
\addplot[purple, mark size=1,very thick] table[x=N, y expr={(\thisrow{max} / \thisrow{N}) * 1e9}] {\dataSort};
\addplot[magenta, mark size=1, very thick] table[x=N, y expr={(\thisrow{avg} / \thisrow{N}) * 1e9}] {\dataSort};
\addplot[orange, mark size=1, very thick] table[x=N, y expr={(\thisrow{min} / \thisrow{N}) * 1e9}] {\dataSort};
\addplot[black, mark size=1,very thick, dashed] table[x=N,y expr={(10*log10(\thisrow{N})}] {\dataMark};
%\node at (axis cs:1e5,90) [anchor=west] {\footnotesize$\Theta(N\log N)$};
\addplot[black, mark size=1,very thick, dash dot dot] table[x=N,y expr={(\thisrow{N} / \thisrow{N}) * 20}] {\dataMark};
%\node at (axis cs:5e7,28) [anchor=west] {\footnotesize$\Theta(N)$};
\addplot[olive, mark size=1,very thick] table[x=N, y expr={(\thisrow{max} / \thisrow{N}) * 1e9}] {\dataMark};
\addplot[teal, mark size=1,very thick] table[x=N, y expr={(\thisrow{avg} / \thisrow{N}) * 1e9}] {\dataMark};
\addplot[green, mark size=1,very thick] table[x=N, y expr={(\thisrow{min} / \thisrow{N}) * 1e9}] {\dataMark};
\addlegendentry{$\!\!\!\!${\scriptsize\texttt{std::sort} slowest}}
\addlegendentry{$\!\!\!${\scriptsize\texttt{std::sort} average}}
\addlegendentry{$\!\!\!\!\!${\scriptsize\texttt{std::sort} fastest}}
\addlegendentry{{\scriptsize$T(N) = \Theta(N\log N)$}}
\addlegendentry{{\scriptsize$\!\!\!\!\!\!\!\!\!\!\!\!\!\!\!T(N) = \Theta(N)$}}
\addlegendentry{$\,${\scriptsize\texttt{xStarKernel} slowest}}
\addlegendentry{$\;${\scriptsize\texttt{xStarKernel} average}}
\addlegendentry{{\scriptsize\texttt{xStarKernel} fastest}}
%\legend{{\footnotesize\texttt{std::sort}}, {\footnotesize Quickmark}}
\end{semilogxaxis}

\end{tikzpicture}
\caption{Visualization of the data from Tables~\ref{fig:table_min}--\ref{fig:table_max} for $\theta = 0.5$.
Performance comparison of \texttt{std::sort} versus \texttt{xStarKernel}:
Computational time spent per element $T(N) / N$ in nanoseconds versus the array length $N$.
}\label{fig:complexity}
\end{figure}
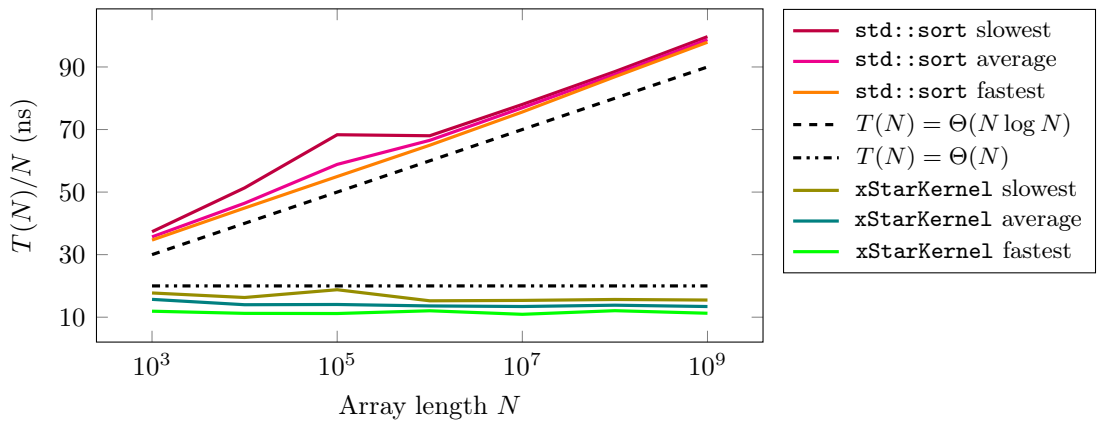
%%%%%%%%%%%%%%%%%%%%

%%%%%%%%%%%%%%%%%%%%%%%%%%%%%%%%%%%%%%%%%%%%%%%%%%%%%%%%%%%%%%%%%%%%%%%%%%%%%%%%%%%

\bibliographystyle{alpha}
\bibliography{ref}

\end{document}